
\documentclass[12pt,twoside,reqno]{amsart}

\usepackage[OT1]{fontenc}
\RequirePackage[english]{babel}

\usepackage{enumerate}

\usepackage{amsthm}
\RequirePackage{amsmath,amsfonts,amssymb,amsthm}

\RequirePackage[dvips]{graphicx}

\usepackage{psfrag}
\usepackage{verbatim}

\usepackage[monochrome]{color}

\usepackage[dvips]{graphicx} 

  \numberwithin{equation}{section}

  \newcommand{\N}{\mathbb{N}}         
  \newcommand{\R}{\mathbb{R}}         
  \newcommand{\T}{\mathbb{T}}		  

  \newcommand{\PP}{\mathbb{P}}        
  \newcommand{\supp}{\text{supp}}        
  \newcommand{\diam}{\text{diam}}       


  \newtheorem{theorem}{Theorem}[section]
  \newtheorem{lemma}[theorem]{Lemma}
  \newtheorem{prop}[theorem]{Proposition}

\newtheorem{remark}[theorem]{Remark}
\newtheorem{remarks}[theorem]{Remarks}

  \theoremstyle{remark}
    \newtheorem*{defn}{Definition}

  \newtheorem{ex}[theorem]{Example}

  \DeclareMathOperator{\ldimloc}{\underline{dim}_{loc}}
    
   \DeclareMathOperator{\lad}{\underline{A}}
      \DeclareMathOperator{\uad}{\overline{A}}
         \DeclareMathOperator{\ad}{A}
           
            \DeclareMathOperator{\esssup}{ess sup}    
\DeclareSymbolFont{bbold}{U}{bbold}{m}{n}
\DeclareSymbolFontAlphabet{\mathbbold}{bbold}

\addtolength{\hoffset}{-1.15cm}
\addtolength{\textwidth}{2.3cm}
\addtolength{\voffset}{0.45cm}
\addtolength{\textheight}{-0.9cm}

\subjclass[2010]{Primary 60D05; Secondary 28A80, 37D35, 37C45}

\begin{document}

\title{Random cutout sets with spatially inhomogeneous intensities}

\author{Tuomo Ojala}
\address{University of Jyvaskyla, Department of Mathematics and Statistics, P.O. Box 
35 (MaD), FI-40014 University of Jyvaskyla, Finland}
\email{tuomo.ojala@gmail.com}

\author{Ville Suomala}
\address{Department of Mathematical Sciences, University of Oulu, Finland}
\email{ville.suomala@oulu.fi}

\author{Meng Wu}
\address{Department of Mathematical Sciences, University of Oulu, Finland}
\email{meng.wu@oulu.fi}

\begin{abstract}
We study the Hausdorff dimension of Poissonian cutout sets defined via inhomogeneous intensity measures on $Q$-regular metric spaces. {\color{blue} Our main results explain the {\color{red}dependence} of the dimension of the cutout sets {\color{red}on} the multifractal structure of the average densities of the $Q$-regular measure.} As a corollary, we obtain formulas for the Hausdorff dimension of such cutout  sets in self-similar and self-conformal spaces using the multifractal decomposition of the average densities for the natural measures.
\end{abstract}

\thanks{V.S. and M.W. acknowledge financial support from the Academy of Finland.}

\maketitle

\section{Introduction}

Given a metric space $X$ and a sequence of open balls $B(x_n,r_n)\subset X$, we define the \emph{cutout set} corresponding to the sequence $(x_n,r_n)\in X\times(0,1)$, {\color{blue} $n\in\N$,} as
\begin{align*}
E&=X\setminus\bigcup_n B(x_n,r_n)\,.
\end{align*}
That is, $E$ is the set left uncovered by the union of the balls $B(x_n,r_n)$. {\color{blue}{If the cutout balls $B(x_n,r_n)$ are randomly distributed, or if their centers are dynamically defined (e.g. if $x_{n+1}=T(x_n)$ for a given dynamics $T\colon X\rightarrow X$)}, it is of interest to investigate whether $E\neq\varnothing$ and to determine its structure and size such as Hausdorff dimension. 

For random cutouts, this problem originates from different versions of the Dvoretzky covering problem as well as in the study of renewal sets (see e.g. {\color{red}\cite{Dvoretzky,Kahane1959,Mandelbrot72,Shepp1972}}). Kahane's book \cite{Kahane85} as well as his survey \cite{Kahane00} on random coverings provide a detailed account of the history of the problem. 
In turn, the dynamical version of the problem arises from {\color{red} dynamical} Diophantine approximation {\color{red}\cite{FanSchmeling03, Fanetal13, LiaoSeuret13}}. See also \cite{JonassonSteif08} for a variant dealing with time-dependent dynamics.}

In this paper, we consider only the case in which {\color{blue}$(x_n,r_n)_{n\in\N}$} are random variables. We shall next describe our model in detail.
Let $X=(X,\mathcal{H},d)$ be a bounded metric space endowed with a 
measure $\mathcal{H}$, which is (Ahlfors-David) $Q$-regular for some $0<Q<\infty$: there are constants $0<c_0<C_0<\infty$ such that
\begin{equation}\label{eq:Q-reg}
c_0r^Q\le\mathcal{H}\left(B(x,r)\right)\le C_0 r^Q\,,
\end{equation}
for all $x\in X$, $0<r<\diam(X)$. Throughout the paper, a measure will refer to a locally finite Borel regular outer measure.

 For each $0<\gamma<+\infty$, let $\mathcal{Y}$ be a Poisson point process on $X\times(0,1)$ with intensity $\gamma\mathcal{H}\times\rho$, where $\rho$ is the measure defined by $\rho(dr)=\tfrac{dr}{r^{Q+1}}$ on $(0,1)$. Thus, $\mathcal{Y}$ is a random collection of pairs $(x,r)\in X\times(0,1)$ such that 
\begin{enumerate}
\item For each Borel set $\mathcal{A}\subset X\times(0,1)$, the random variable $\sharp(\mathcal{A}\cap\mathcal{Y})$ is Poisson distributed with mean $\gamma\mathcal{H}\times\rho(\mathcal{A})$.
\item For disjoint $\mathcal{A}_i$, the random variables $\sharp(\mathcal{A}_i\cap\mathcal{Y})$ are independent. 
\end{enumerate}
In particular, $\mathcal{Y}$ is {\color{blue} almost surely } countably infinite. We consider the random cutout set:
\begin{align}\label{eq definition of Poisson cutout}
E=X\setminus\bigcup_{(x,r)\in\mathcal{Y}}B(x,r)\,.
\end{align}
Note that the intensity of $\mathcal{Y}$ and the induced probability $\mathbb{P}$ crucially depend on $\gamma$. Let
\begin{equation}\label{eq:gamma0}
\gamma_0:=\sup\{\gamma>0\,:\mathbb{P}(E\neq\varnothing)>0\}\,.
\end{equation}
A central problem is to determine the exact value of $\gamma_0$ ($0<\gamma_0<\infty$ always holds, see Remark \ref{rem:coarse}). Further, when $0<\gamma<\gamma_0$, we would like to determine the almost sure Hausdorff dimension of $E$. Since for any $\gamma>0$, there is a positive probability for extinction ($E=\varnothing$), {\color{blue} we will consider the following quantity:
\begin{defn}
Given a Poisson point process $\mathcal{Y}$ as above, the associated \emph{cutout dimension} is the unique value $s\ge 0$ such that $\dim_H(E)\le s$ almost surely and for all $t<s$, there is a positive probability that $\dim_H (E)>t$.
\end{defn}
We note that Z\"ahle \cite{Zahle85} uses the term \emph{essential dimension of $E$} for the cutout dimension.}

The case when $X$ is a {\color{blue} bounded } subdomain of some Euclidean space $\R^d$ and $\mathcal{H}=\mathcal{L}^d$ is the $d$-dimensional Lebesgue measure is well understood. In particular, $\gamma_0=\gamma_0(d)=d/ \alpha(d)$, where $\alpha(d)=\mathcal{L}^d(B(0,1))$ and for $0<\gamma\le d/\alpha(d)$, {\color{blue} the cutout dimension (and this holds also if Hausdorff dimension is replaced by box-dimension in the definition)} equals 
$d-\gamma \alpha(d)$, see \cite{ElHelou78, Zahle85, Thacker06, NacuWerner11, ShmerkinSuomala}. 
In this case, the point process $\mathcal{Y}$ is translation invariant in an obvious way, but it possesses also strong scale invariance: If $I,\lambda I\subset(0,1)$ for {\color{blue}some} $\lambda>0$, then it is equally likely that a point $x\in X$ is covered by a ball $B(x_n,r_n)\in\mathcal{Y}$ for $r_n\in I$ as it is for $r_n\in\lambda I$. There are many works (e.g. \cite{Mandelbrot72, Fetal85, Zahle85, Rivero03, BiermeEstrade12}) in which this {\color{red} scale invariance condition} has been relaxed by replacing the measure $\rho(dr)$ by a more general measure of the form $\tfrac{dr}{h(r)}$. For such  generalizations, it is still possible to get results on the size of $E$ and the range of $\gamma$, for which $E\neq\varnothing$ with positive probability. However, it turns out that the model is much more sensitive for the changes in the spatial component $\mathcal{H}$ and in essentially all the works we are aware of, only the case in which $\mathcal{H}=\mathcal{L}^d$ has been considered. The papers \cite{HJ73} and \cite{Zahle85} are notable  exceptions. In these papers, various estimates for the dimension of the cutout sets are obtained in the context of a general metric space. However, when it comes to determining the exact value of the cutout dimension, 
{\color{red} it is assumed  that $\mathcal{H}=\mathcal{L}^d$ in \cite{Zahle85}.} {\color{blue} Also in \cite{HJ73}, a strong homogeneity assumption on $\mathcal{H}$ (implying in particular that $\sup_{x,y\in X}\tfrac{\mathcal{H}(B(x,r))}{\mathcal{H}(B(y,r))}\longrightarrow 1$ as $r\downarrow 0$) is assumed.} 

We note that in many of the references given above, the model is actually one where $r_n$ is a deterministic sequence and $x_n$ are independent and uniformly distributed {\color{red} according to {\color{blue} $\tfrac{\mathcal{L}^d|_X}{\mathcal{L}^d(X)}$ (and often $X$ is the torus $\mathbb{T}^d$)}}. However, in the case of translation invariant intensity the methods in the case of deterministic radii and iid centres are essentially the same as in the Poissonian case described above ({\color{blue}e.g. in \cite[Section 10]{Kahane1990} it is explained how to reduce the case of deterministic radii to a Poissonian case}). 

Furthermore, in many of the cited works, a significant part of attention has been given to the study of the \emph{random covering set} 
\[F=\bigcap_{k\in\N}\bigcup_{n\ge k}B(x_n,r_n)\,,\]
consisting of the points covered infinitely often by the balls $B(x_n,r_n)$. However, under the present assumptions and for any choice of $\gamma$, it follows from Fubini's theorem that {\color{blue} almost surely } $\mathcal{H}(X\setminus F)=0$ so that {\color{red}$F$ has the same dimension as $X$}. { \color{blue} Furthermore}, for the case of deterministic radii, as well as for more general Poissonian intensities $\mathcal{H}(dx)\times\tfrac{dr}{h(r)}$, the dimensional properties of the associated random covering set in the setting of $Q$-regular spaces are analogous to the Euclidean situation (where $\mathcal{H}$ is the Lebesgue measure). 
For instance, the proof of \cite[Proposition 4.7]{Jetal14} adapts easily to the case of $Q$-regular metric spaces. These observations indicate that changing the spatial component of the intensity measure does not affect the 
{\color{red} determination of the dimensions of}
 the random covering sets, as opposed to the {\color{red} same problem for} the cutout set $E$. 

Before going further, let us provide a simple example to get an idea why the lack of homogeneity in $\mathcal{H}$ is a subtle issue for the cutouts. {\color{red} Here, by homogeneity we mean that $\mathcal{H}(B(x,r))=\mathcal{H}(B(y,r))$ for any $x,y\in X$} {\color{blue} and small enough $r>0$}. Suppose $X=X_1\cup X_2$ where, say, $X_1$ and $X_2$ are disjoint {\color{blue} open} subintervals of $[0,1]$. Let $\mu=a\mathcal{L}_{|X_1}+b\mathcal{L}_{|X_2}$ and suppose $0<a<b<\tfrac12$. Now, conditional on $E\cap X_1=\varnothing$, we know from the above discussion that {\color{blue} almost surely } $\dim_H (E)\le 1-2b$ while on $E\cap X_1\neq\varnothing$, there is a positive probability that $\dim_H (E)=1-2a$. This shows that one cannot expect any a.s. constancy result for the Hausdorff dimension of $E$. Of course, it still holds that the cutout dimension is $1-2a$ (see also Remark \ref{rems:final}).

{\color{blue}
 For each $0<t<1$, let \[E_t=X\setminus\bigcup_{(x,r)\in\mathcal{Y}\,,r>t}B(x,r)\] 
 and for $x\in X$ denote $p(x,t)=\mathbb{P}(x\in E_t)$. Notice that $x\in E_t$ if and only if $\mathcal{A}_t(x)\bigcap \mathcal{Y}=\varnothing$ for 
\[\mathcal{A}_t(x)=\{(y,r)\,:\,r>t,\,y\in B(x,r)\}\,,\]
so that
\begin{equation}\label{equation introduction 1}
\begin{split}
p(x,t)&=\mathbb{P}(\mathcal{A}_t(x)\cap\mathcal{Y}=\varnothing)=\exp\left(-\gamma\mathcal{H}\times\rho(\mathcal{A}_t(x))\right)\\
&=\exp\left(-\gamma\int_{r=t}^{1}\mathcal{H}(B(x,r))r^{-Q-1}\,dr\right)\,.
\end{split}
\end{equation}
}

{\color{red}
{\color{blue} We may rewrite the identity \eqref{equation introduction 1} as 
\begin{equation}\label{Dprel}
 p(x,t)=t^{\gamma \ad(\mathcal{H},x,t)}\,
\end{equation}
where 
\begin{equation}\label{eq:ad_def}
\ad(\mathcal{H},x,t)=\frac{\int_{r=t}^1\mathcal{H}(B(x,r))r^{-Q-1}\,dr}{-\log t}\,.
\end{equation}
{\color{blue} In particular, the expected measure of $E_t$ equals
\begin{equation}\label{eq:expectedarea}
\mathbb{E}(\mathcal{H}(E_t))=\int_{x\in X}p(x,t)\,d\mathcal{H}(x)=\int_{x\in X}t^{\gamma A(\mathcal{H},x,t)}\,d\mathcal{H}(x)\,.
\end{equation}
} 
These formulas} suggest an intimate connection to the lower and upper ($Q$-)average densities of $\mathcal{H}$ 
defined at $x\in X$ as
\begin{align}
\lad(\mathcal{H},x)&=\liminf_{t\rightarrow 0}\ad(\mathcal{H},x,t)\,,\\
\uad(\mathcal{H},x)&=\limsup_{t\rightarrow 0}\ad(\mathcal{H},x,t)\,.
\end{align}
If $\lad(\mathcal{H},x)$ and $\uad(\mathcal{H},x)$ coincide, we denote the common value by $\ad(\mathcal{H},x)$.} 
It is well known that for a {\color{red}$Q$-regular measure $\mathcal{H}$}, the density $\lim_{r\downarrow 0}\tfrac{\mathcal{H}(B(x,r))}{r^Q}$ fails to exist at $\mathcal{H}$-almost all points, {\color{blue} unless $Q$ is an integer and $X$ is rectifiable (see \cite{Preiss87})}. However,  for many important $Q$-regular measures (see e.g. \cite{BedfordFisher92, Falconer92, Falconer97, Zahle98, Zahle01}), the average density $\ad(\mathcal{H},x)$ is known to exist and take a constant value $\alpha$ at $\mathcal{H}$-almost all points of $X=\supp(\mathcal{H})$. Recalling \eqref{eq:expectedarea}, a first naive guess would be to predict that in such a case the cutout dimension would equal $Q-\gamma\alpha$. However, it turns out that in most cases of interest, the dimension of $E$ is affected by the zero measure set, where $\ad(\mathcal{H},x)\neq\alpha$ and a finer analysis of the multifractal properties of the average densities is needed in order to catch the correct dimension of the cutout set $E$.

{\color{red}In the following, we present our main results. For this, we need some more notations and definitions. 
For {\color{blue} $0\le\alpha<\beta\le\infty$ } and $0<r<1$, we define
{\color{blue}
\begin{equation}\label{introduction equation x(a,b,r)}
\Delta^{\mathcal{H}}_r(\alpha,\beta)=\{x\in X\mid\alpha<\ad(\mathcal{H},x,r)<\beta\}
\end{equation}
and 
\begin{equation}\label{introduction equation x(a)}
\Delta^{\mathcal{H}}(\alpha)=\{x\in X\mid \ad(\mathcal{H},x)=\alpha\}.
\end{equation}
In what follows, the measure $\mathcal{H}$ will always be clear from the context, and thus we may ignore it in the notation and write $\Delta_r(\alpha,\beta)$ and $\Delta(\alpha)$ instead of the more rigorous $\Delta^{\mathcal{H}}_r(\alpha,\beta)$ and $\Delta^{\mathcal{H}}(\alpha)$. 
Let $\alpha_{\min}=\inf\{\alpha : \Delta(\alpha)\neq \varnothing\}$, $\alpha_{\max}=\sup\{\alpha : \Delta(\alpha)\neq \varnothing\}$ and $\alpha_0={\rm esssup}_\mathcal{H} \uad(\mathcal{H},x)$.  
Finally, let
\begin{equation}\label{introduction equation spectrum}
f(\alpha)=\dim_H (\Delta(\alpha))
\end{equation}
} 
and
\begin{equation}\label{introduction equation formula dim of cutout}
{\color{blue}m(\gamma)=\sup_{\alpha\ge 0}f(\alpha)-\gamma \alpha\,.}
\end{equation}

The following theorem is our main general result which says that if $f(\alpha)$ is continuous on $]\alpha_{\min},\alpha_{\max}[$ and the quantity $A(\mathcal{H},x ,r)$ satisfies a large deviation principle then the cutout dimension is given by $m(\gamma)$. {\color{blue} Recall that $0<\gamma<\infty$ is the parameter used to adjust the intensity of $\mathcal{Y}$.} 

\begin{theorem}\label{main theorem}
Suppose that $f(\alpha)$ is continuous on $]\alpha_{\min},\alpha_{\max}[$ {\color{cyan} and for all $0<\beta\le\alpha_0$ and all $\varepsilon>0$, there is $0<C<\infty$ such that
\begin{equation}\label{LD condition}
\mathcal H (\Delta_r(0,\beta))\le Cr^{Q-f(\beta)-\varepsilon}\,,
\end{equation}
whenever $r>0$.}
If $m(\gamma)\ge 0$, then almost surely $\dim_H(E)\le m(\gamma)$ and $\dim_H(E)=m(\gamma)$ with positive probability. If $m(\gamma)<0$, then $E=\varnothing$ almost surely.
\end{theorem}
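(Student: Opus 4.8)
The plan is to identify $m(\gamma)$ as the cutout dimension through a first moment (covering) argument, which simultaneously yields the almost sure upper bound and the extinction statement, and through a second moment (energy) argument, which yields the matching lower bound with positive probability.

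\emph{Upper bound and extinction.} Fix dyadic scales $t_k=2^{-k}$ and a maximal $t_k$-separated net $\mathcal{N}_k\subset X$, so that $\#\mathcal{N}_k\lesssim t_k^{-Q}$ by $Q$-regularity and the balls $\{B(z,t_k)\}_{z\in\mathcal{N}_k}$ cover $X$. Since $E\subseteq E_{t_k}$, it suffices to control $N_k:=\#\{z\in\mathcal{N}_k : B(z,t_k)\cap E_{t_k}\neq\varnothing\}$. I would first show that a single such ball meets $E_{t_k}$ with probability at most $t_k^{\gamma\,\ad(\mathcal{H},z,t_k)-o(1)}$. Grouping the net points according to the value of $\ad(\mathcal{H},z,t_k)$ into strata $\Delta_{t_k}(\alpha_j,\alpha_{j+1})$, bounding the number of balls per stratum by $\mathcal{H}(\Delta_{t_k}(0,\alpha_{j+1}))/t_k^{Q}\le C\,t_k^{-f(\alpha_{j+1})-\e}$ via \eqref{LD condition}, and multiplying by the survival bound $t_k^{\gamma\alpha_j}$, each stratum contributes $t_k^{\gamma\alpha_j-f(\alpha_{j+1})-\e}$. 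Refining the partition and using the continuity of $f$ on $]\alpha_{\min},\alpha_{\max}[$, the exponent tends to $\gamma\alpha-f(\alpha)\ge-m(\gamma)$, so that summing the (at most logarithmically many) strata gives $\mathbb{E}(N_k)\lesssim t_k^{-m(\gamma)-2\e}$. If $m(\gamma)<0$ this tends to $0$, whence $\mathbb{P}(E\neq\varnothing)\le\liminf_k\mathbb{E}(N_k)=0$. If $m(\gamma)\ge 0$ and $s>m(\gamma)$, choosing $\e$ small makes $\sum_k\mathbb{E}(N_k)\,t_k^{s}<\infty$; Borel--Cantelli forces $N_k t_k^{s}\to 0$ almost surely, and since the net yields $\mathcal{H}^{s}_{2t_k}(E)\lesssim N_k t_k^{s}$, we conclude $\dim_H E\le s$, hence $\dim_H E\le m(\gamma)$ almost surely.

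\emph{Lower bound.} Assume $m(\gamma)>0$ and pick $\alpha^\ast$ with $f(\alpha^\ast)-\gamma\alpha^\ast=m(\gamma)$; continuity of $f$ lets me choose a compact $K\subseteq\Delta(\alpha^\ast)$ carrying a Frostman measure $\mu$ with $\dim_H\mu$ arbitrarily close to $f(\alpha^\ast)$ and with $\ad(\mathcal{H},x,t)\to\alpha^\ast$ uniformly on $K$. Following the standard reweighting, I would define the random measures $\mu_t(dx)=p(x,t)^{-1}\mathbf{1}_{E_t}(x)\,\mu(dx)$, which form a martingale as $t\downarrow 0$ with $\mathbb{E}(\mu_t(X))=\mu(X)$. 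The crucial computation is the second moment: by the independence properties of $\mathcal{Y}$ and inclusion--exclusion,
\begin{equation*}
\frac{\mathbb{P}(x,y\in E_t)}{p(x,t)\,p(y,t)}=\exp\!\big(\gamma\,\mathcal{H}\times\rho(\mathcal{A}_t(x)\cap\mathcal{A}_t(y))\big)\approx d(x,y)^{-\gamma\alpha^\ast}
\end{equation*}
for $x,y\in K$, since $\mathcal{A}_t(x)\cap\mathcal{A}_t(y)$ is governed by radii $r\gtrsim d(x,y)$ and $\ad(\mathcal{H},\cdot)\equiv\alpha^\ast$ on $K$. Hence $\mathbb{E}(\mu_t(X)^2)\approx I_{\gamma\alpha^\ast}(\mu)$ and, more generally, $\mathbb{E}(I_s(\mu_t))\approx I_{s+\gamma\alpha^\ast}(\mu)$, both finite for $s<m(\gamma)$ once $\dim_H\mu$ is taken close enough to $f(\alpha^\ast)$. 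Uniform $L^2$-boundedness produces a limit $\mu_\infty$ supported on $E$ with $\mathbb{E}(\mu_\infty(X))=\mu(X)>0$ and $\mathbb{E}(\mu_\infty(X)^2)<\infty$, so the Paley--Zygmund inequality gives $\mathbb{P}(\mu_\infty\neq 0)>0$; Fatou gives $\mathbb{E}(I_s(\mu_\infty))<\infty$, so on $\{\mu_\infty\neq 0\}$ the mass distribution principle yields $\dim_H E\ge s$ for every $s<m(\gamma)$, and letting $s\uparrow m(\gamma)$ gives $\dim_H E\ge m(\gamma)$ there. Combined with the almost sure upper bound, $\dim_H E=m(\gamma)$ on this event of positive probability (the boundary case $m(\gamma)=0$, i.e. non-extinction, being recovered by a limiting argument with $\dim_H\mu\uparrow f(\alpha^\ast)$).

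\emph{Main obstacle.} The delicate points are the two moment estimates in the inhomogeneous setting. For the upper bound the difficulty is obtaining the \emph{sharp} exponent $\gamma\,\ad(\mathcal{H},z,t)$ in the per-ball survival probability: on a general $Q$-regular space a ball need not be comparable to a single covering ball, and the coarse estimate based on the event that $B(z,t)$ is entirely swallowed by one cut-out ball degrades the constant in the exponent (annuli may carry a definite proportion of the mass), so a finer comparison of the survival of the ball to that of its centre against the radius-$>t$ balls meeting it is required; this then has to be matched carefully against \eqref{LD condition} and the continuity of $f$ so that the strata sum to exponent $-m(\gamma)$ and nothing larger. For the lower bound the heart of the matter is the second moment estimate: one must establish $p(x,t)\approx t^{\gamma\alpha^\ast}$ and $\mathbb{P}(x,y\in E_t)/(p(x,t)p(y,t))\approx d(x,y)^{-\gamma\alpha^\ast}$ \emph{uniformly} on $K$, which is precisely where the pointwise limit $\ad(\mathcal{H},x)=\alpha^\ast$ must be upgraded to uniform finite-scale control on a set of nearly full dimension $f(\alpha^\ast)$, making the role of the multifractal spectrum essential.
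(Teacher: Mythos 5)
Your proposal is correct and follows essentially the same strategy as the paper: a first-moment covering argument (stratifying by the finite-scale density $\ad(\mathcal{H},\cdot,t)$, counting strata via \eqref{LD condition}, and refining with the continuity of $f$) for the upper bound and extinction, and a Kahane-type reweighted martingale $p(x,t)^{-1}\mathbf{1}_{E_t}\,d\mu$ built on a Frostman measure over $\Delta(\alpha^\ast)$, with the joint-survival factorization $\mathbb{P}(x,y\in E_t)\lesssim p(x,t)p(y,t)/p(x,d(x,y))$ and an energy/capacity argument, for the lower bound. The obstacles you flag are precisely what the paper's Lemma \ref{lemma:neighbourhood} (survival of a ball versus its centre, stability of $\Delta_r$ under $r$-perturbation) and the kernel $K(x,y)=d(x,y)^{-s}p(x,d(x,y))$ in Lemmas \ref{lemma:energy}--\ref{lemma:lb} are designed to resolve.
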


In Section \ref{sec:applications}, Theorem \ref{main theorem} is applied in the case when $X$ is a $C^{1+\varepsilon}$ self-conformal set satisfying the strong separation condition and the spatial component $\mathcal{H}$ for the Poisson intensity is the natural self-conformal measure {\color{blue}on} $X$. We show that in this case, $(X,\mathcal{H})$ satisfies the hypothesis of Theorem \ref{main theorem}, thus the cutout dimension {\color{cyan} equals} $m(\gamma)$.
}

The structure of the paper is as follows. In Section \ref{sec:aux}, using the familiar first and second moment methods, we present some tools to estimate the dimension of the intersections of $E$ with certain sub- and superlevel sets of the average densities $\uad(\mathcal{H},\cdot)$, $\lad(\mathcal{H},\cdot)$. This part applies to any $Q$-regular measure and can be used directly to obtain some (coarse) estimates on the value of $\gamma_0$ {\color{blue}(recall \eqref{eq:gamma0})} and on the dimension of $E$. {\color{blue} Combining these tools and the assumption on the multifractal spectrum $f(\alpha)$ allows us to  give a proof for Theorem \ref{main theorem}, this is provided in the end of the Section.}

{\color{red}In Section \ref{sec:applications}, we present our second main result, Theorem \ref{thm:main},}
{\color{blue} which is an application of Theorem \ref{main theorem}}; We consider the case when $X$ is {\color{blue} a self-conformal set  satisfying the strong separation condition and $\mathcal{H}$ is the natural self-conformal measure, which is well known to be $Q$-regular with $Q=\dim_H(X)$.} Using tools from thermodynamical formalism and expressing the average densities {\color{blue} of $\mathcal{H}$ } as ergodic averages, we examine the multifractal spectrum {\color{blue} $f(\alpha)$ for $\mathcal{H}$. Theorem \ref{main theorem} then  } enables us to obtain a formula for $\gamma_0$ and for the cutout dimension when $0<\gamma<\gamma_0$. {\color{blue} The condition \eqref{LD condition} for self-conformal spaces is verified in the Appendix following Section \ref{sec:applications}.} 

\section{Auxiliary dimension estimates}\label{sec:aux}



In this section, we provide some useful upper and lower estimates for the Hausdorff dimension of $E\cap\{\alpha<\ad(\mathcal{H},x)<\beta\}$ when $\alpha$ and $\beta$ vary. Our standing assumption is that $\mathcal{H}$ is a $Q$-regular measure on the metric space $X$. Further, the parameter $\gamma>0$ that determines (together with $\mathcal{H}$) the intensity of $\mathcal{Y}$ is fixed throughout the section. {\color{blue} For $Y\subset X$ and $t>0$, we denote by
$Y(t)=\{y\in X\,:\,d(y,Y)\le t\}$,
the (closed) $t$-neighbourhood of $Y$.} 

\subsection{Dimension upper bound}

Recall that for each $0\le\alpha<\beta\le\infty$ and $0<r<1$, we denote {\color{blue} $\Delta_r(\alpha,\beta)=\{x\in X\mid\alpha<\ad(\mathcal{H},x,r)<\beta\}$, where $\ad(\mathcal{H},x,r)$ is as in \eqref{eq:ad_def}.}

\begin{lemma}\label{lemma:neighbourhood} 

{\rm (i)} There exists $C<\infty$, independent of {\color{cyan}  $x$ and }$t$, such that 
$$\PP(x\in E(t))\leq C\PP(x\in E_t).$$

{\rm(ii)} If $0 \leq \alpha' < \alpha < \beta < \beta' < \infty$, there exists $r_0>0$ such that 
$$ \Delta_r(\alpha,\beta)(r) \subset \Delta_r(\alpha',\beta') $$
for all $0<r<r_0$.
\end{lemma}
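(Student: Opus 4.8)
The plan is to compare $E(t)$ with $E_t$ by a geometric covering argument. If $x \in E(t)$, there is a point $y \in E$ with $d(x,y) \le t$, so $y$ is uncovered by all balls of $\mathcal{Y}$. I want to relate this to the event that $x$ avoids all \emph{large} balls (radius $> t$). The key observation is that if a ball $B(z,r) \in \mathcal{Y}$ with $r > 2t$ covered $x$, then it would also cover every point within distance $t$ of $x$ (enlarging the radius slightly), contradicting $y \in E$; conversely, avoiding the large balls while allowing small ones to be re-centered. So first I would show the inclusion of events up to enlarging radii by a bounded factor: roughly, $\{x \in E(t)\}$ forces $\mathcal{A}_{ct}(x) \cap \mathcal{Y} = \varnothing$ for some fixed constant $c$. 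Then $\PP(x \in E(t)) \le \PP(\mathcal{A}_{ct}(x)\cap\mathcal{Y}=\varnothing) = p(x,ct)$. Finally I would use the $Q$-regularity \eqref{eq:Q-reg} to control the ratio $p(x,ct)/p(x,t)$: from \eqref{equation introduction 1}, the logarithms differ by $\gamma\int_{t}^{ct}\mathcal{H}(B(x,r))r^{-Q-1}\,dr$, which by \eqref{eq:Q-reg} is bounded above by $\gamma C_0 \int_t^{ct} r^{-1}\,dr = \gamma C_0 \log c$, a constant independent of $x$ and $t$. This gives $p(x,ct) \le C\, p(x,t)$ with $C = c^{\gamma C_0}$, completing the estimate.

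\medskip

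\noindent **Part (ii):** Here I would argue that points near $\Delta_r(\alpha,\beta)$ have comparable average density at scale $r$, using the $Q$-regularity to control how $\ad(\mathcal{H},x,r)$ changes under a perturbation of $x$ by at most $r$. Take $x \in \Delta_r(\alpha,\beta)(r)$, so there is $x' \in \Delta_r(\alpha,\beta)$ with $d(x,x') \le r$. For $s \ge r$ we have $B(x',s-r) \subset B(x,s) \subset B(x',s+r)$, hence by $Q$-regularity $\mathcal{H}(B(x,s))$ and $\mathcal{H}(B(x',s))$ differ by a multiplicative factor controlled via comparing $s \pm r$ with $s$. The plan is to integrate these comparisons against $s^{-Q-1}\,ds$ over $[r,1]$ as in \eqref{eq:ad_def}, showing that $|\ad(\mathcal{H},x,r) - \ad(\mathcal{H},x',r)|$ is small. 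The subtlety is that the relative error in $\mathcal{H}(B(x,s))$ is large when $s$ is close to $r$ (where $s \pm r$ is not close to $s$), but in that regime the contribution to the integral defining $\ad$ is itself a bounded quantity, while the normalizing factor $-\log r \to \infty$ as $r \to 0$. So the overall effect on $\ad(\mathcal{H},x,r)$ from the small-scale discrepancy is $O(1/(-\log r))$, which tends to $0$.

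\medskip

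\noindent Choosing $r_0$ small enough that this discrepancy is below $\min(\alpha-\alpha', \beta'-\beta)$ for all $r < r_0$ then forces $\alpha' < \ad(\mathcal{H},x,r) < \beta'$, giving $x \in \Delta_r(\alpha',\beta')$ as required.

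\medskip

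\noindent **Main obstacle:** I expect the delicate point in (ii) to be the behavior of the density comparison near the lower limit of integration $s \approx r$, where the geometric inclusions $B(x',s-r) \subset B(x,s)$ degenerate (indeed $s - r$ can be $0$). The fix is to split the integral in \eqref{eq:ad_def} into a region $s \in [r, 2r]$, where I bound the integrand crudely by $Q$-regularity to get an $O(1)$ contribution that is killed by the $-\log r$ normalization, and a region $s \in [2r, 1]$, where $s - r \ge s/2$ makes the multiplicative comparison between $\mathcal{H}(B(x,s))$ and $\mathcal{H}(B(x',s))$ uniformly bounded and close to $1$ after taking logarithms. In (i), the analogous care is needed to pin down the constant $c$ in the radius enlargement so that a large ball covering $x$ genuinely covers the whole $t$-neighbourhood; this is where the triangle inequality and the requirement $r > $ (enlargement threshold) must be matched correctly.
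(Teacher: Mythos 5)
Part (i) of your proposal rests on a false geometric claim. There is no constant $c$ for which the event $\{x\in E(t)\}$ forces $\mathcal{A}_{ct}(x)\cap\mathcal{Y}=\varnothing$ (i.e.\ $\{x\in E(t)\}\subset\{x\in E_{ct}\}$): the witness $y\in E$ with $d(x,y)\le t$ only certifies that no cutout ball covers $y$, and a ball of arbitrarily large radius can cover $x$ while missing $y$. Concretely, on $X=[0,1]$ take $(z,r)\in\mathcal{Y}$ with $z<x<y$, $x-z=r-\epsilon$, $y-x=t$ and $\epsilon<t$; then $x\in B(z,r)$ but $y\notin B(z,r)$, and with positive probability all other balls of $\mathcal{Y}$ miss $y$ as well, so $x\in E(t)\setminus E_{ct}$ for every $c$. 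Your parenthetical ``enlarging the radius slightly'' is exactly the flaw: the enlarged ball $B(z,r+t)$ does cover $y$, but it is not a member of $\mathcal{Y}$, so no contradiction with $y\in E$ arises. The correct inclusion goes by \emph{shrinking} the cutout balls, as the paper does: $x\in E(t)$ implies $x\notin B(z,r-t)$ for all $(z,r)\in\mathcal{Y}$ with $r>t$, hence $E(t)\subset X\setminus\bigcup_{(z,r)\in\mathcal{Y},\,r>t}B(z,r-t)$. The comparison one must then make is between $\int_t^1\mathcal{H}(B(x,r-t))r^{-Q-1}\,dr$ and $\int_t^1\mathcal{H}(B(x,r))r^{-Q-1}\,dr$ --- an additive shift of the radius --- which the paper handles via $(1+y)^{-(Q+1)}\ge 1-(Q+1)y$ together with the uniform bound $t\int_t^1\mathcal{H}(B(x,r))r^{-Q-2}\,dr\le C_0$. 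Your estimate $p(x,ct)\le c^{\gamma C_0}p(x,t)$ is correct as far as it goes, but it controls a change of the lower integration limit, which is not the comparison the lemma needs, and it cannot be attached to $\PP(x\in E(t))$ because the event inclusion it presupposes is false.

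Part (ii) has the right skeleton (the sandwich $B(x',s-r)\subset B(x,s)\subset B(x',s+r)$, splitting off the bottom scales, and killing their $O(1)$ contribution with the $-\log r$ normalization), but your estimate on the main region $s\in[2r,1]$ is wrong as stated: under $Q$-regularity alone the multiplicative comparison there is uniformly \emph{bounded}, not close to $1$. Comparing $\mathcal{H}(B(x,s))$ with $\mathcal{H}(B(x',s))$ through $c_0s^Q\le\mathcal{H}(B(\cdot,s))\le C_0s^Q$ costs a factor of order $C_0/c_0$; even comparing the same measure at radii $s\pm r$ (equivalently, the kernels $(u\mp r)^{-Q-1}$ versus $u^{-Q-1}$ after a change of variables) costs a factor as large as $(3/2)^{Q+1}$ when $s$ is comparable to $2r$. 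A fixed factor $K>1$ only yields $\ad(\mathcal{H},x,r)\le K\ad(\mathcal{H},x',r)+o(1)$, and $K\beta$ need not lie below $\beta'$, so the conclusion does not follow. Two repairs work. (a) The paper's route is additive, not multiplicative: $\mathcal{H}(B(x,s))\le\mathcal{H}(B(x',s))+\mathcal{H}(B(x,s+r)\setminus B(x,s-r))$, combined with the uniform bound $\int_r^1\mathcal{H}(B(x,s+r)\setminus B(x,s-r))s^{-Q-1}\,ds\le C$ (proved by the same first-order expansion as in (i)), so the discrepancy in $\ad(\mathcal{H},\cdot,r)$ is $O(1/(-\log r))$, uniformly. (b) Alternatively, keep your multiplicative scheme but split at $Ar$ with $A=A(Q,\beta'-\beta)$ large, so that on $[Ar,1]$ the kernel ratio is at most $(1-1/(A+1))^{-(Q+1)}$, genuinely close to $1$, at the price of an $O(\log A)$ bottom contribution still absorbed by $-\log r$. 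Note that both parts of the lemma turn on the same mechanism --- first-order control of the integral under an additive perturbation of the radius --- and this is precisely the ingredient missing from both halves of your write-up.
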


\begin{proof}
 (i)   Observe that by definition of $E(t)$ and elementary geometry, we have 
$$E(t)\subset X \setminus \bigcup_{(x,r)\in\mathcal{Y},r>t}B(x,r-t) =: E_t'\,.$$
So $\PP(x\in E(t))\leq \PP (x\in E_t')$. Thus, we only need to show that  $\PP (x\in E_t')\leq C\PP(x\in E_t)$ for some $C< \infty$ independent of {\color{cyan} $x$ and }$t$. Since $x\in E'_t$ if and only if $\mathcal{A}\cap\mathcal{Y}=\varnothing$ for 
\[\mathcal{A}=\{(y,r)\,:\,r>t,\,y\in B(x,r-t)\}\,,\]
we deduce that
\begin{equation}\label{equation lemma:neighbourhood 1}
\PP(x\in E_t'){\color{blue}=\PP(\mathcal{A}\cap\mathcal{Y}=\varnothing)=} \exp\left(-\gamma\int_{t}^1\mathcal{H}(B(x,r-t))\frac{dr}{r^{Q+1}}\right)\,.
\end{equation}
Now, we have 
\begin{equation}\label{equation lemma:neighbourhood 2}
\begin{split}
\int_{t}^1\mathcal{H}(B(x,r-t))\frac{dr}{r^{Q+1}}&\geq \int_{2t}^1\mathcal{H}(B(x,r-t))\frac{dr}{r^{Q+1}}\\
&=\int_{t}^1\mathcal{H}(B(x,r))\frac{dr}{r^{Q+1}(1+t/r)^{Q+1}}\,.
\end{split}
\end{equation}
An elementary calculation shows that
$$\frac{1}{(1+y)^{Q+1}}\geq 1-(Q+1)y \ \ \textrm{for all }\ y\in [0,1].$$
Applying this to $y=t/r$ in (\ref{equation lemma:neighbourhood 2}), we get 
$$
\int_{t}^1\mathcal{H}(B(x,r-t))\frac{dr}{r^{Q+1}} \geq   \int_{t}^1\mathcal{H}(B(x,r))\frac{dr}{r^{Q+1}}-(Q+1)t\int_{t}^1\mathcal{H}(B(x,r))\frac{dr}{r^{Q+2}} $$
Since  $C'=\sup_{{\color{cyan} x\in X, 0<t\leq 1 }}(Q+1)t\int_{t}^1\mathcal{H}(B(x,r))\frac{dr}{r^{Q+2}}<+\infty$, substituting the above inequality in (\ref{equation lemma:neighbourhood 1})  yields
$$\PP(x\in E_t')\leq \PP(x\in E_t)\exp(\gamma C').$$
Letting $C=\exp(\gamma C')$ ends the proof of (i).

(ii) We have seen in the proof of (i) that there exists $C<+\infty$, such that 
$$\int_{t}^1\mathcal{H}(B(x,r)\setminus B(x,r-t))\frac{dr}{r^{Q+1}} \leq C.$$
By the same argument, it follows that
$$\int_{t}^1\mathcal{H}(B(x,r+t)\setminus B(x,r-t))\frac{dr}{r^{Q+1}} \leq C'$$
for some $C'<+\infty$ independent of {\color{cyan} $x$ and }$t$.
Thus for every $\varepsilon >0$ there exists $r_0 >0$ such that 
\[{\color{blue}\frac{\int_{t}^1 \mathcal{H}(B(x,r+t) \setminus B(x,r-t))r^{-Q-1}\,dr}{-\log t} \leq \varepsilon}\]
for every {\color{cyan} $x\in X$, }$0 < t < r_0$.
  Since for every $x \in \Delta_t(\alpha,\beta)(t)$, there exists $y\in \Delta_t(\alpha,\beta)$ such that $d(x,y) < t$,  we deduce that
{\color{blue}
  \[ A(\mathcal{H},x,t) \leq A(\mathcal{H},y,t) + \frac{\int_{t}^1 \mathcal{H}(B(x,r+t) \setminus B(x,r-t))r^{-Q-1}\,dr}{-\log t} < \beta' \,,\]} 
  when $\varepsilon < \beta' - \beta$.
  The lower bound follows by a similar calculation. 
\end{proof}

\begin{lemma}\label{lemma:up}
Let $0<\alpha'<\alpha<\beta<\beta'\le\infty$ and $C,\eta\ge 0$. Suppose that $\mathcal H(\Delta_r(\alpha',\beta'))\le Cr^\eta$ for all $0<r<1$. Then {\color{blue} almost surely, }
\[\dim_H\left(E\cap \limsup_{r\downarrow 0}\Delta_r(\alpha,\beta)\right)\le Q-\gamma\alpha'-\eta\,,\]
if $Q-\gamma\alpha'-\eta\ge 0$ while $E\cap \limsup_{r\downarrow 0}\Delta_r(\alpha,\beta)=\varnothing$ if $Q-\gamma\alpha'-\eta<0$.
\end{lemma}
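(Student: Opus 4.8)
The plan is to prove the upper bound by the first moment (covering) method: for a fixed $s>Q-\gamma\alpha'-\eta$ I will build, at a geometric sequence of scales, an economical random cover of $E\cap\limsup_{r\downarrow 0}\Delta_r(\alpha,\beta)$ by balls meeting $E$, and show that the expected $s$-dimensional Hausdorff sum of this cover tends to $0$. Throughout I fix auxiliary thresholds $\alpha'<\alpha''<\alpha<\beta<\beta''<\beta'''<\beta'$ (taking $\beta'''<\infty$, which is harmless even when $\beta'=\infty$ since $\Delta_r(\alpha',\beta''')\subset\Delta_r(\alpha',\beta')$), so that all the slack created below still fits inside the gaps $(\alpha',\alpha)$ and $(\beta,\beta')$.

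\emph{Discretisation of the scale.} The first task, and the main technical point, is to replace the continuous $\limsup$ by a countable one. By $Q$-regularity \eqref{eq:Q-reg}, the integrand $\mathcal H(B(x,u))u^{-Q-1}$ in \eqref{eq:ad_def} is comparable to $u^{-1}$, so for $r_{n+1}\le r\le r_n$ with $r_n=\theta^n$ ($\theta\in(0,1)$ fixed) one has
\[
\bigl|\ad(\mathcal H,x,r)-\ad(\mathcal H,x,r_n)\bigr|\le \frac{C_0\log(1/\theta)}{-\log r_n}\xrightarrow[n\to\infty]{}0
\]
uniformly in $x$. Hence for all large $n$, every $x\in\Delta_r(\alpha,\beta)$ with $r\in[r_{n+1},r_n]$ satisfies $x\in\Delta_{r_n}(\alpha'',\beta'')$, which yields the inclusion
\[
\limsup_{r\downarrow 0}\Delta_r(\alpha,\beta)\ \subset\ \limsup_{n\to\infty}\Delta_{r_n}(\alpha'',\beta'')=:A_0.
\]
It therefore suffices to bound $\dim_H(E\cap A_0)$.

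\emph{Economical covers and the first moment.} Fix $n$ large. Let $\{y_i\}_i$ be a maximal $r_n$-separated subset of $\Delta_{r_n}(\alpha'',\beta'')$; then the balls $B(y_i,r_n)$ cover $\Delta_{r_n}(\alpha'',\beta'')$, while the balls $B(y_i,r_n/2)$ are disjoint and, by Lemma~\ref{lemma:neighbourhood}(ii), contained in $\Delta_{r_n}(\alpha'',\beta'')(r_n)\subset\Delta_{r_n}(\alpha',\beta''')\subset\Delta_{r_n}(\alpha',\beta')$. Combining $Q$-regularity with the hypothesis $\mathcal H(\Delta_{r_n}(\alpha',\beta'))\le Cr_n^{\eta}$ bounds the number of balls, $\#\{y_i\}\le C' r_n^{\eta-Q}$. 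A ball $B(y_i,r_n)$ can meet $E$ only if $y_i\in E(r_n)$, and by Lemma~\ref{lemma:neighbourhood}(i) together with \eqref{Dprel} and $\ad(\mathcal H,y_i,r_n)>\alpha''>\alpha'$,
\[
\PP\bigl(y_i\in E(r_n)\bigr)\le C\,p(y_i,r_n)=C\,r_n^{\gamma\ad(\mathcal H,y_i,r_n)}\le C\,r_n^{\gamma\alpha'}.
\]
Thus the expected $s$-sum of those doubled balls that meet $E$ is at most $\#\{y_i\}\cdot C r_n^{\gamma\alpha'}\cdot(2r_n)^s\le C''\,r_n^{s+\gamma\alpha'+\eta-Q}$.

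\emph{Conclusion.} For each $n$, the balls $B(y_i,r_m)$ with $m\ge n$ and $y_i\in E(r_m)$ cover $E\cap A_0$ and have diameters $\le 2r_n$, so the corresponding Hausdorff sum $S_n$ satisfies $\mathcal H^s_{2r_n}(E\cap A_0)\le S_n$ and, writing $\delta=s+\gamma\alpha'+\eta-Q>0$,
\[
\EE[S_n]\le C''\sum_{m\ge n} r_m^{\delta}=C''\,\frac{\theta^{n\delta}}{1-\theta^{\delta}}\xrightarrow[n\to\infty]{}0.
\]
Since $\sum_m \EE[\,\cdot\,]<\infty$, the full sum converges a.s.\ and hence $S_n\to0$ a.s.; letting $n\to\infty$ gives $\mathcal H^s(E\cap A_0)=0$ almost surely. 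Intersecting over a sequence $s\downarrow Q-\gamma\alpha'-\eta$ yields the claimed bound $\dim_H(E\cap\limsup_{r\downarrow 0}\Delta_r(\alpha,\beta))\le Q-\gamma\alpha'-\eta$ a.s. When $Q-\gamma\alpha'-\eta<0$ the same first moment with $s=0$ gives $\EE[\#\{(m,i):m\ge n,\ y_i\in E(r_m)\}]\to0$; as this count is integer-valued and non-increasing in $n$, it is a.s.\ eventually zero, so the covering family is a.s.\ eventually empty and $E\cap\limsup_{r\downarrow 0}\Delta_r(\alpha,\beta)=\varnothing$ almost surely. The main obstacle is the scale discretisation, i.e.\ ensuring that the errors from passing to the sequence $r_n$ and from the neighbourhood inclusions all fit within the fixed gaps $\alpha'<\alpha$ and $\beta<\beta'$.
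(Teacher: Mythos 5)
Your proof is correct and takes essentially the same route as the paper's: a first-moment covering argument at a geometric sequence of scales, using both parts of Lemma \ref{lemma:neighbourhood} together with \eqref{Dprel}, the hypothesis $\mathcal H(\Delta_r(\alpha',\beta'))\le Cr^\eta$, and $Q$-regularity to control the number of covering balls, with the same "eventually zero count" treatment of the case $Q-\gamma\alpha'-\eta<0$. The only cosmetic difference is the order of operations: you fix a deterministic maximal $r_n$-separated net and bound the probability that each ball meets $E$, whereas the paper bounds the expected $\mathcal H$-measure of the neighbourhood $\left(\Delta_{2^{-n}}(\widetilde{\alpha},\widetilde{\beta})\cap E\right)(2^{-n})$ and then extracts the ball count almost surely.
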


\begin{proof}
Observe that by \eqref{Dprel}, $\mathbb P(x\in E_r)\le r^{\gamma\alpha'}$, for $x\in \Delta_r(\alpha',\beta')$. Pick $\alpha'<\widetilde{\alpha}<\alpha$, $\beta<\widetilde{\beta}<\beta'$. Using Lemma \ref{lemma:neighbourhood}, we have for each $\theta<\gamma\alpha'+\eta$ that
\begin{align*}
&\mathbb{E}\left(\sum_{n\in\N}2^{\theta n}\mathcal{H}\left( \left(\Delta_{2^{-n}}(\widetilde{\alpha},\widetilde{\beta})\cap E\right)(2^{-n})\right)\right)\\
\le&C_1\sum_{n}2^{\theta n}\int_{\Delta_{2^{-n}}(\alpha',\beta')}\mathbb{P}\left(x\in E_{2^{-n}}\right)\,d\mathcal{H}(x)\\
\le& C_1\sum_n 2^{n\theta}\mathcal H(\Delta_{2^{-n}}(\alpha',\beta'))2^{-n\gamma\alpha'}\\
\le& C_2\sum_{n}2^{n(\theta-\gamma\alpha'-\eta)}<\infty\,.
\end{align*}
In particular,
we see that almost surely, 
\[\lim_{n\to\infty}2^{\theta n}\mathcal{H}\left(\left( \Delta_{2^{-n}}(\widetilde{\alpha},\widetilde{\beta})\cap E \right)(2^{-n})\right)=0\,.\]
Since $\mathcal{H}$ is $Q$-regular this implies {\color{blue} almost surely} the existence of $N_0\in\N$ such that for all $n\ge N_0$, the set $(\Delta_{2^{-n}}(\widetilde{\alpha},\widetilde{\beta})\cap E)$ is covered by a union of balls 
$$B\left(x_{n,1},2^{-n}\right),\ldots, B\left(x_{n,m_n},2^{-n}\right)$$ with $m_n\le 2^{n(Q-\theta)}$. Since 
\[\limsup_{r\downarrow 0}\Delta_r(\alpha,\beta)\subset\bigcup_{n=N}^\infty\bigcup_{i=1}^{m_n}B\left(x_{n,i},2^{-n}\right)\,,\] 
for all $N\ge N_0$, and 
\begin{align*}
\sum_{n\ge N}m_n 2^{-n(Q-\theta+\varepsilon)}\le \sum_{n\ge N}2^{-n\varepsilon}\longrightarrow 0\,,
\end{align*} 
for any $\varepsilon>0$, this implies the claim. Note that if {\color{blue}$Q-\gamma\alpha'-\eta<0$}, we have $m_n=0$ and thus $(\Delta_{2^{-n}}(\widetilde{\alpha},\widetilde{\beta})\cap E)(2^{-n})=\varnothing$ for all $n\ge N_0$.
\end{proof}

\subsection{A lower estimate}

Let $\mu$ be a measure on $X$. For each $t>0$, we define a measure $\nu_t$ by
\begin{equation}\label{nut}
d\nu_t(x)=p(x,t)^{-1}\mathbf{1}_{E_t}(x)\,d\mu(x)\,.
\end{equation}
{\color{blue} Recall that $p(x,t)=\PP(x\in E_t)$.} Then $(\nu_t)_{t>0}$ is a $T$-martingale in the sense of Kahane \cite{Kahane87} and it is easy to check that {\color{blue} almost surely,} $\nu_t$ is weakly convergent to a random limit measure $\nu$. 

Let $0<s<\infty$ be such that
\begin{equation}\label{eq:energy}
\int_X\int_X d(x,y)^{-s}\,d\mu(x)\,d\mu(y)<\infty\,,
\end{equation}
and define a Kernel $K\colon X\times X\to[0,\infty[$ by 
\begin{equation}\label{eq:kernel}
K(x,y)=d(x,y)^{-s}{\color{blue}p\left(x,d(x,y)\right)\,.}
\end{equation}

\begin{lemma}\label{lemma:energy}
\[\mathbb{E}\left(\int\int K(x,y)\,d\nu(x)\,d\nu(y)\right)<\infty\,.\]
\end{lemma}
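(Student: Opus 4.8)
The goal is to bound the expected $s$-energy of the limit measure $\nu$ with respect to the kernel $K$. The natural strategy is to first pass to the approximating measures $\nu_t$, compute the double expectation $\EE\bigl(\int\int K(x,y)\,d\nu_t(x)\,d\nu_t(y)\bigr)$ with a uniform bound in $t$, and then let $t\downarrow 0$ using Fatou's lemma (since $\nu_t\to\nu$ weakly and $K$ is a nonnegative kernel). So the core of the argument is to control $\EE\bigl(\int\int K(x,y)\,d\nu_t(x)\,d\nu_t(y)\bigr)$ uniformly in $t$.

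\medskip

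\emph{First I would} unfold the definition of $\nu_t$ from \eqref{nut}. Writing $d\nu_t(x)=p(x,t)^{-1}\mathbf 1_{E_t}(x)\,d\mu(x)$, the double integral against $\nu_t$ becomes
\[
\int\int K(x,y)\,d\nu_t(x)\,d\nu_t(y)=\int\int \frac{K(x,y)}{p(x,t)\,p(y,t)}\,\mathbf 1_{E_t}(x)\,\mathbf 1_{E_t}(y)\,d\mu(x)\,d\mu(y)\,.
\]
Taking expectation and using Fubini (everything is nonnegative), the only random factor is $\mathbf 1_{E_t}(x)\mathbf 1_{E_t}(y)$, whose expectation is $\PP(x\in E_t,\,y\in E_t)$. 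The key probabilistic input is that $x\in E_t$ and $y\in E_t$ fail to be independent only through the balls $B(z,r)\in\mathcal{Y}$ large enough to hit both $x$ and $y$, i.e. those with $r\ge d(x,y)/2$; since $\mathcal{Y}$ is a Poisson process, disjoint regions of $X\times(0,1)$ give independent contributions, so one can factorize the joint covering probability. A standard computation of this type yields a bound of the shape
\[
\PP(x\in E_t,\,y\in E_t)\le C\,p(x,t)\,p(y,t)\,p\!\left(x,d(x,y)\right)^{-1}
\]
(for $d(x,y)\ge t$, with the obvious modification when $d(x,y)<t$), where the factor $p(x,d(x,y))^{-1}$ accounts for the ``overcounted'' correlation coming from balls of radius between $d(x,y)$ and $1$ that could cover both points, and $Q$-regularity makes $p(x,d(x,y))$ and $p(y,d(x,y))$ comparable.

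\medskip

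\emph{Next I would} substitute this estimate. The normalizing factors $p(x,t)^{-1}p(y,t)^{-1}$ cancel exactly against the $p(x,t)\,p(y,t)$ in the covering-probability bound, leaving
\[
\EE\!\left(\int\int K(x,y)\,d\nu_t(x)\,d\nu_t(y)\right)\le C\int\int \frac{K(x,y)}{p(x,d(x,y))}\,d\mu(x)\,d\mu(y)\,.
\]
Here the definition \eqref{eq:kernel} of $K$ is tailored precisely so that $K(x,y)/p(x,d(x,y))=d(x,y)^{-s}$, and the right-hand side is finite by the energy hypothesis \eqref{eq:energy}. Crucially the bound is independent of $t$, so Fatou's lemma applied along $t\downarrow 0$ transfers finiteness to $\nu$ and gives the claim.

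\medskip

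\emph{The main obstacle} I anticipate is establishing the joint covering estimate $\PP(x\in E_t,\,y\in E_t)\le C\,p(x,t)\,p(y,t)\,p(x,d(x,y))^{-1}$ with a constant $C$ uniform in $x,y,t$. The inclusion--exclusion / independence structure of the Poisson process gives $\PP(x\in E_t, y\in E_t)=\exp(-\gamma\,\mathcal{H}\times\rho(\mathcal{A}_t(x)\cup\mathcal{A}_t(y)))$, and the whole problem reduces to estimating $\mathcal{H}\times\rho(\mathcal{A}_t(x)\cap\mathcal{A}_t(y))$ — the overlap of the two ``covering cones'' — from below by $\gamma^{-1}(-\log)$ of $p(x,d(x,y))$ up to an additive constant. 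This is where $Q$-regularity enters decisively: one needs that for radii $r\gtrsim d(x,y)$ a definite proportion of the balls covering $x$ also cover $y$, which follows from comparing $\mathcal{H}(B(x,r)\cap B(y,r))$ with $\mathcal{H}(B(x,r))$ when $r\ge d(x,y)$, exactly the kind of neighbourhood comparison already exploited in Lemma \ref{lemma:neighbourhood}. Handling the regime $d(x,y)<t$ separately (where the two events are governed by essentially the same set of large balls) and keeping the constants uniform will be the most delicate bookkeeping.
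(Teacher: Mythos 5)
Your route is the paper's own: reduce to a bound on $\EE\bigl(\int\int K\,d\nu_t\,d\nu_t\bigr)$ that is uniform in $t$, prove the joint survival estimate \eqref{eq:joint_probab}, let the normalizations $p(x,t)^{-1}p(y,t)^{-1}$ cancel so that only $K(x,y)/p(x,d(x,y))=d(x,y)^{-s}$ remains, and conclude by \eqref{eq:energy}. The statement of the key estimate and the cancellation step are exactly right. However, your sketch of the proof of the key estimate has the inequality backwards, and as described that step would fail. Writing, by inclusion--exclusion and \eqref{equation introduction 1},
\[
\PP(x,y\in E_t)=p(x,t)\,p(y,t)\,\exp\Bigl(\gamma\,\mathcal{H}\times\rho\bigl(\mathcal{A}_t(x)\cap\mathcal{A}_t(y)\bigr)\Bigr)\,,
\]
the desired bound \eqref{eq:joint_probab} is equivalent to an \emph{upper} bound on the overlap,
\[
\mathcal{H}\times\rho\bigl(\mathcal{A}_t(x)\cap\mathcal{A}_t(y)\bigr)\le \gamma^{-1}\bigl(-\log p(x,d(x,y))\bigr)+O(1)\,,
\]
whereas you propose to bound the overlap \emph{from below}, via ``a definite proportion of the balls covering $x$ also cover $y$'', i.e. $\mathcal{H}(B(x,r)\cap B(y,r))\ge c\,\mathcal{H}(B(x,r))$. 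That lower bound yields a lower bound on $\PP(x,y\in E_t)$, which is useless here (it is what one would need for the reverse of \eqref{eq:joint_probab}). The estimate actually needed is the easy direction: $\mathcal{H}(B(x,r)\cap B(y,r))\le\mathcal{H}(B(x,r))$ always; the two cones are disjoint at scales $r<d(x,y)/2$; and $Q$-regularity bounds the contribution of the scales $r\in[d(x,y)/2,\,d(x,y)]$ by $\int_{d(x,y)/2}^{d(x,y)}C_0r^{-1}\,dr=C_0\log 2$. This is precisely the paper's computation, phrased there as a lower bound for $\mathcal{H}\times\rho$ of the union $\mathcal{A}_t(x)\cup\mathcal{A}_t(y)$. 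Note also that no comparability of $p(x,d(x,y))$ with $p(y,d(x,y))$ is needed: \eqref{eq:joint_probab} is asymmetric in $x$ and $y$ and is used in that asymmetric form.

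A second, minor gap: in the passage $t\downarrow 0$, nonnegativity of $K$ is not enough to combine Fatou's lemma with the weak convergence $\nu_t\rightharpoonup\nu$; one needs $K$ to be lower semicontinuous, so that $\int\int K\,d\nu\,d\nu\le\liminf_{t\downarrow0}\int\int K\,d\nu_t\,d\nu_t$ holds surely. The paper supplies this by observing that $x\mapsto\ad(\mathcal{H},x,r)$ is continuous (from the computation in Lemma \ref{lemma:neighbourhood}) and writing $K$ as an increasing limit of continuous functions. With these two repairs, your argument coincides with the paper's proof.
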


\begin{proof}
It suffices to show that for all $0<t<1$,
\begin{equation}\label{eq:Efinitet}
\mathbb{E}\left(\int\int K(x,y)\,d\nu_t(x)\,d\nu_t(y)\right)<C<\infty\,,
\end{equation}
where $C$ is independent of $t$. Indeed, using that $x\mapsto \ad(\mathcal{H},x,r)$ is continuous (this follows e.g. from the calculation in the proof of Lemma \ref{lemma:neighbourhood}) and recalling \eqref{Dprel} allows to express $K(x,y)$ as a limit of increasing continuous functions, so that \eqref{eq:Efinitet} yields the claim.

We first claim that for all $0<\delta<1$,
\begin{equation}\label{eq:joint_probab}
\mathbb{P}(x,y\in E_\delta)\le C
{\color{blue}p(x,\delta)p(y,\delta)/p\left(x,d(x,y)\right)\,,}
\end{equation}
where $C$ is independent of $\delta$ and $d(x,y)$.
Indeed, this is a result of direct calculation (we assume that $\delta<d(x,y)/2$ as otherwise \eqref{eq:joint_probab} follows directly from \eqref{Dprel}): {\color{blue}Letting $\mathcal{A}=\{(z,r)\,:\,r>\delta, \{x,y\}\cap B(z,r)\neq\varnothing\}$, it follows that $x,y\in E_\delta$ precisely when $\mathcal{A}\cap\mathcal{Y}=\varnothing$. Moreover}
\begin{align*}
  {\color{blue}\mathcal{H}\times\rho(\mathcal{A})=}\int_{\delta}^1& \mathcal{H}(B(x,s) \cup B(y,s)) s^{-Q-1} ds\\
  \ge& \int_{\delta}^1 \mathcal{H}(B(y,s)) s^{-Q-1} ds + \int_{\delta}^{d(x,y)/2} \mathcal{H}( B(x,s)) s^{-Q-1} ds \\
  \ge& \int_{\delta}^1 \mathcal{H}(B(y,s)) s^{-Q-1} ds + \int_{\delta}^1 \mathcal{H}(B(x,s)) s^{-Q-1} ds \\
  &- \int_{d(x,y)}^{1} \mathcal{H}( B(x,s)) s^{-Q-1} ds - C_1\,,\\
\end{align*}
where $C_1$ is a constant such that $\int_{d(x,y)/2}^{d(x,y)} \mathcal{H}( B(x,s)) s^{-Q-1} ds \leq C_1$ and thus only depends on the $Q$-regularity data of the measure $\mathcal{H}$.
The claim \eqref{eq:joint_probab} now follows by multiplying the inequality by $-\gamma$ and taking the exponential {\color{blue}(recall \eqref{equation introduction 1})}.

Combining \eqref{eq:joint_probab}, Fubini's theorem, and \eqref{eq:energy} we calculate
\begin{align*}
&\mathbb{E}\left(\int\int K(x,y)\,d\nu_t(x)\,d\nu_t(y)\right)\\
=&\int_X\int_X\frac{\mathbb{P}(x,y\in E_t)p(x,d(x,y)))d(x,y)^{-s}}{p(x,t) p(y,t)}\,d\mu(x)\,d\mu(y)\\
&\le C\int\int d(x,y)^{-s}d\mu(x)d\mu(y)<\infty\,.
\end{align*}
Since this upper bound is independent of $t$, we are done.
\end{proof}

The following lemma employs the standard connection between capacity and dimension in the situation at hand. Recall that the lower local dimension of a measure $\nu$ at $x\in X$ is defined as
\[\ldimloc(\nu,x)=\liminf_{r\downarrow0}\frac{\log\nu(B(x,r))}{\log r}\,.\]

\begin{lemma}\label{lemma:lb}
{\color{blue} Let $\nu_t$ and $\nu$ be defined via \eqref{nut} and suppose that \eqref{eq:energy} holds with} $s-\gamma\alpha>0$. If for $\mu$-almost all $x\in X$, $\uad(\mathcal H,x)<\alpha$, then $\nu(X)>0$ with positive probability and
almost surely,
\[\ldimloc(\nu,x)\ge s-\gamma\alpha\,,\]
for $\nu$-almost all $x\in X$.
\end{lemma}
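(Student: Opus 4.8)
The plan is to establish the two claims---positive probability of $\nu(X)>0$ and the almost sure local dimension lower bound---by exploiting the energy estimate from Lemma~\ref{lemma:energy} together with standard potential-theoretic reasoning. First I would observe that Lemma~\ref{lemma:energy} gives $\mathbb{E}\left(\iint K(x,y)\,d\nu(x)\,d\nu(y)\right)<\infty$, so the energy integral is almost surely finite; in particular the limit measure $\nu$ is not identically zero with positive probability, since its total mass satisfies $\mathbb{E}(\nu(X))=\mu(X)>0$ (because $\mathbb{E}(\nu_t(X))=\int p(x,t)^{-1}\mathbf{1}_{E_t}(x)\,d\mu(x)$ has expectation $\mu(X)$ by \eqref{nut} and the definition $p(x,t)=\mathbb{P}(x\in E_t)$). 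Combining $\mathbb{E}(\nu(X))>0$ with the finite-energy bound guarantees $\nu(X)>0$ on an event of positive probability.

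Next I would translate the finiteness of the kernel energy into a statement about the local dimension of $\nu$. The kernel is $K(x,y)=d(x,y)^{-s}p(x,d(x,y))$, and by \eqref{Dprel} we have $p(x,d(x,y))=d(x,y)^{\gamma A(\mathcal{H},x,d(x,y))}$. The hypothesis $\uad(\mathcal{H},x)<\alpha$ for $\mu$-a.e.\ $x$ means that for such $x$ and all sufficiently small $d(x,y)$ we have $A(\mathcal{H},x,d(x,y))<\alpha$, whence $p(x,d(x,y))\ge d(x,y)^{\gamma\alpha}$ for $y$ close to $x$. Therefore $K(x,y)\ge d(x,y)^{-(s-\gamma\alpha)}$ locally, so finiteness of $\iint K\,d\nu\,d\nu$ forces finiteness of the $(s-\gamma\alpha)$-energy of $\nu$ near $\nu$-typical points. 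The standard Frostman-type argument then converts finite $s'$-energy into the local dimension bound $\ldimloc(\nu,x)\ge s'$ for $\nu$-almost every $x$, with $s'=s-\gamma\alpha>0$: indeed, if the lower energy integral $\int K(x,y)\,d\nu(y)$ is finite for $\nu$-a.e.\ $x$, then $\liminf_{r\downarrow 0}\nu(B(x,r))r^{-(s-\gamma\alpha)}=0$, giving the desired dimension estimate.

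The main technical obstacle I anticipate is making the passage from the \emph{expected} finite energy of Lemma~\ref{lemma:energy} to an \emph{almost sure} finite energy of the limit measure $\nu$ rigorous, and ensuring the bound $p(x,d(x,y))\ge d(x,y)^{\gamma\alpha}$ is uniform enough to be usable. The expectation being finite gives almost sure finiteness of $\iint K\,d\nu\,d\nu$ directly, but one must be careful that the weak limit $\nu$ genuinely carries the energy bound rather than merely $\nu_t$; here the lower semicontinuity of the energy functional under weak convergence (applied via the representation of $K$ as an increasing limit of continuous functions, exactly as exploited in the proof of Lemma~\ref{lemma:energy}) should suffice. For the uniformity issue, since $\uad(\mathcal{H},x)<\alpha$ only holds for $\mu$-a.e.\ $x$ and only asymptotically as $d(x,y)\downarrow0$, I would restrict to a set of full $\mu$-measure on which $A(\mathcal{H},x,r)<\alpha$ for all $r<r_0(x)$, and absorb the non-uniform threshold $r_0(x)$ into a bounded multiplicative constant on the kernel (the contribution from $d(x,y)\ge r_0(x)$ is controlled by the boundedness of $X$ and $Q$-regularity). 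This localization is the delicate bookkeeping step but presents no conceptual difficulty once the energy estimate is in hand.
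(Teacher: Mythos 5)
Your second claim (the local dimension bound) follows the paper's potential-theoretic route and is essentially sound modulo the omission noted below, but your first claim contains a genuine gap. You assert $\mathbb{E}(\nu(X))=\mu(X)>0$ on the grounds that $\mathbb{E}(\nu_t(X))=\mu(X)$ for every $t$. This interchange of limit and expectation is exactly what can fail for Kahane's $T$-martingales: $\nu_t(X)$ is a nonnegative martingale, so it converges almost surely, but without uniform integrability its limit may have strictly smaller expectation; Fatou's lemma gives only $\mathbb{E}(\nu(X))\le\mu(X)$. In the present model the inequality \emph{is} strict: on the event $E=\varnothing$ (which has positive probability for every $\gamma$, and probability one for $\gamma$ large) one has $\nu_t(X)=0$ for all small $t$, hence $\nu(X)=0$. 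Observe also that your positivity argument never uses the hypotheses $s-\gamma\alpha>0$ and $\uad(\mathcal{H},x)<\alpha$ for $\mu$-a.e.\ $x$; if it were correct, then taking $\mu=\mathcal{H}$ and any $s<Q$ in \eqref{eq:energy} it would give $\mathbb{P}(E\neq\varnothing)\ge\mathbb{P}(\nu(X)>0)>0$ for \emph{every} $\gamma$, contradicting $\gamma_0<\infty$ (Remark \ref{rem:coarse}). The paper closes this gap with a second-moment argument: it fixes $r_0>0$ so that $F=\{x\in X:\ad(\mathcal{H},x,r)<\alpha\text{ for all }0<r<r_0\}$ has $\mu(F)>0$, uses the two-point correlation bound \eqref{eq:joint_probab} to get $\mathbb{E}\left(\nu_t(F)^2\right)\le C\int\int d(x,y)^{-s}\,d\mu(x)\,d\mu(y)<\infty$ uniformly in $t$ (this is where $s>\gamma\alpha$ and the definition of $F$ enter), and concludes that $\nu_t(F)$ is an $L^2$-bounded, hence uniformly integrable, martingale with mean $\mu(F)>0$, so its limit is positive with positive probability. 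Some such uniform-integrability input is unavoidable here.

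There is also a smaller but real omission in your dimension bound: you pass from the hypothesis ``$\uad(\mathcal{H},x)<\alpha$ for $\mu$-a.e.\ $x$'' to the kernel lower bound $K(x,y)\ge d(x,y)^{\gamma\alpha-s}$ near $\nu$-typical points. Since $\nu$ is only a weak limit of the absolutely continuous measures $\nu_t\ll\mu$, it is not automatic that $\mu$-null sets are $\nu$-null; this is the first step of the paper's proof, established via outer regularity and Fatou's lemma: for open $U_\varepsilon\supset N$ with $\mu(U_\varepsilon)<\varepsilon$ one has $\mathbb{E}(\nu(N))\le\liminf_{t\downarrow0}\mathbb{E}(\nu_t(U_\varepsilon))=\mu(U_\varepsilon)<\varepsilon$. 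Once you add this step and replace your first-moment positivity argument by the $L^2$ one, your outline coincides with the paper's proof: your restriction to good sets plays the role of the paper's sets $F_M=\{x:\uad(\mathcal{H},x)<\alpha\text{ and }\int K(x,y)\,d\nu(y)<M\}$, and the bound $\nu(B(x,r))\le Mr^{s-\gamma\alpha}$ yields the claim exactly as you describe.
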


\begin{proof}
We first observe that if $N\subset X$ is $\mu$-null, then it is almost surely $\nu$-null. Indeed, for each $\varepsilon>0$, there is an open set $U_\varepsilon\supset N$, such that $\mu(U_\varepsilon)<\varepsilon$. Thus Fatou's lemma gives
\begin{align*}
\mathbb E(\nu(N))\le \mathbb E(\nu(U_\varepsilon))\le\mathbb E(\liminf_{t\downarrow 0}\nu_t(U_\varepsilon))\le\liminf_{t\downarrow 0}\mathbb{E}\nu_t(U_\varepsilon)=\mu(U_\varepsilon)<\varepsilon\,.
\end{align*}
Whence $\mathbb E(\nu(N))=0$, or in other words, $\nu(N)=0$ almost surely.

Let 
\[F_M=\{x\in X\mid \uad(\mathcal H,x)<\alpha\text{ and }\int_{y\in X}K(x,y)\,d\nu<M\}\,.\] 
Then, by the above and Lemma \ref{lemma:energy}, it follows that {\color{blue} almost surely} 
\begin{equation*}
\nu(X\setminus F_M)\longrightarrow 0\text{ as }M\longrightarrow\infty\,. 
\end{equation*}
On the other hand, for all $x\in F_M$, and all small enough $0<r<1$, \eqref{eq:kernel} and \eqref{Dprel} give
$K(x,y)\ge d(x,y)^{\gamma\alpha-s}\ge r^{\gamma\alpha-s}$ for $y\in B(x,r)$ and whence
\[r^{\gamma\alpha-s}\nu(B(x,r))\le\int_{y\in B(x,r)} K(x,y)\,d\nu<M\,,\]
implying
$\nu(B(x,r))\le M r^{s-\gamma\alpha}$. The second claim of the Lemma now follows by taking logarithms, letting $r\downarrow 0$ and finally letting $M\longrightarrow\infty$.

To prove that $\nu(X)>0$ is an event of positive probability, we first pick so small $r_0>0$ that $\mu(F)>0$, where $F=\{x\in X\mid\ad(\mathcal H,x,r)<\alpha\text{ for all }0<r<r_0\}$. Calculating as in the proof of Lemma \ref{lemma:energy} yields
\begin{align*}
&\mathbb{E}\left(\nu_t(F)^2\right)\le C
\int_{x\in F}\int_{y\in F}d(x,y)^{-\gamma\ad(\mathcal H,x,d(x,y))}\,d\mu(x)\,d\mu(y)\\
&\le C\int\int d(x,y)^{-s}d\mu(x)d\mu(y)<\infty\,.
\end{align*}
In other words, $\nu_t(F)$ is an $L^2$-bounded martingale with nonzero expectation (since $\mu(F)>0$). Whence, $\nu(X)\ge \nu(F)>0$ with positive probability.
\end{proof}

\begin{remarks}\label{rem:coarse}
(i) Lemmas \ref{lemma:up} and \ref{lemma:lb} can be used directly to obtain upper and lower estimates on $\gamma_0$ and on the dimension of $E$. Let {\color{cyan} $d_0=\inf_{x\in X}\uad(\mathcal{H},x)$, $D_0=\mu-\esssup_{x\in X}\uad(\mathcal{H},x)$} (note that $c_0\le d_0\le D_0\le C_0$,{\color{blue} where $c_0,C_0$ are as in \eqref{eq:Q-reg}}). Applying Lemma \ref{lemma:up} with $\eta=0$, implies $\gamma_0\le Q/d_0$ and $\dim_H(E)\le Q-\gamma d_0$ a.s, if $0<\gamma\le\gamma_0$. In turn, Lemma \ref{lemma:lb} applied for $\mu=\mathcal{H}$ and $s=Q$, gives the estimate $\gamma_0\ge Q/D_0$ and provided $0<\gamma<Q/D_0$, implies that  $\dim_H(E)\ge Q-\gamma D_0$ with positive probability. {\color{cyan} Although we will not deal with packing dimension later in this paper, we mention that if $d_1=\inf_{x\in X}\lad(\mathcal{H},x)$, then a modification of Lemma \ref{lemma:up}, where $\limsup \Delta_r(\alpha,\beta)$ is replaced by $\liminf\Delta_r(\alpha,\beta)$ implies that the packing dimension of $E$ is at most $Q-\gamma d_1$ ($0<\gamma\le\gamma_0$).}

(ii) {\color{blue} As indicated by Theorem \ref{main theorem}, } even if $\lad(\mathcal{H},x)=\uad(\mathcal{H},x)=\alpha$ for $\mathcal{H}$-almost every $x$, such direct estimates are usually far from being sharp. Actually, as will be seen in the Section \ref{sec:applications}, the dimension of $E$ depends intimately on the multifractal properties of the average density of $\mathcal{H}$.

(iii) As of curiosity, we mention that if $X=\T^d$ is the $d$-dimensional torus {\color{blue} (or any open subset of $\R^d$)} and $Q=d$, then $\gamma_0=d/d_0$ and for $\gamma\le\gamma_0$, the cutout dimension is $d-\gamma d_0$. Indeed, for each  $c>d_0$, there is a point $x\in X$ and $r>0$ such that $\mathcal{H}(B(x,r))\le c r^d$. An application of the Lebesgue density theorem yields a Borel set $B\subset B(x,r)$ such that $\mathcal{H}(B)>0$ and $\uad(\mathcal{H},x)<c$ for all $x\in B$. Lemma \ref{lemma:lb} {\color{blue} applied to $\mu=\mathcal{H}|_B$ then implies } that $\dim_H(E)\ge Q-\gamma c$ is an event of positive probability.
\end{remarks}

\subsection{Proof of Theorem \ref{main theorem}}

{\color{red} Now, we are ready for the proof of Theorem \ref{main theorem}. Recall the definitions of  $\Delta(\alpha), \alpha_{\min}, \alpha_{\max},\alpha_0, f(\alpha) $ and $m(\gamma)$ from the Introduction, see \eqref{introduction equation x(a)}-\eqref{introduction equation formula dim of cutout}.}

\begin{proof}[Proof of Theorem \ref{main theorem}]
Suppose that $m(\gamma)\ge0$.
We first consider the upper bound. Since trivially $\mathbb E(\mathcal H(\Delta_r(\alpha_0,+\infty))\le\mathcal H(X)=C<\infty$, {\color{blue}using that $\{x\,|\,\uad(\mathcal{H},x)\ge\alpha_0\}\subset\limsup_{r\downarrow0}\Delta_r(\alpha_0-\varepsilon,+\infty)$ for all $\varepsilon>0$, Lemma \ref{lemma:up} implies that almost surely,
\begin{equation}\label{gea0}
\dim_H\left(E\cap\{x\mid \uad(\mathcal H,x)\ge\alpha_0\}\right)\le \max\{0,Q-\gamma\alpha_0\}=\max\{0, f(\alpha_0)-\gamma\alpha_0\}\le m(\gamma)\,.
\end{equation}
Note that \eqref{LD condition} implies in particular, that $f(\alpha_0)=Q$.

To deal with the points where $\uad(\mathcal{H},x)<\alpha_0$, we first remark that Lemma \ref{lemma:up} together with \eqref{LD condition} imply $\uad(\mathcal{H},x)\ge\alpha_{\min}$ for all $x\in X$. Indeed, if $\alpha<\alpha_{\min}$, then for $\varepsilon>0$ small enough, \eqref{LD condition} gives $\mathcal{H}(\Delta_r(\alpha-\varepsilon,\alpha+\varepsilon))=O(r^{Q-\varepsilon})$, and applying Lemma \ref{lemma:up} with $\eta=Q-\varepsilon$ implies $\{x\in E\,|\,\uad(\mathcal{H},x)=\alpha\}\subset E\cap\limsup_{r\downarrow0}\Delta_r(\alpha-\varepsilon,\alpha+\varepsilon)=\varnothing$.} 

Next, let {\color{blue}$\alpha_{\min}\le\alpha<\beta<\alpha_0<\alpha_{\max}$ (the case $\alpha_0=\alpha_{\max}$ reduces to the estimate \eqref{gea0}).} Combining Lemma \ref{lemma:up} and \eqref{LD condition} {\color{blue} and using that $\{\alpha\le\uad(\mathcal{H},x)\le\beta\}\subset\limsup_{r\downarrow 0}\Delta_r(\alpha-\varepsilon,\beta+\varepsilon)$}, gives for all small $\varepsilon>0$ that
\[\dim_H\left(E\cap\{x\mid \alpha\le \uad(\mathcal H,x))\le\beta\}\right)\le \max\{0,f(\beta+\varepsilon)-\gamma(\alpha-\varepsilon)+\varepsilon\}\,.\]
Letting $\varepsilon\downarrow 0$ and using the continuity of $f$ on $]\alpha_{\min},\alpha_{\max}[$
{\color{blue} then implies
\begin{align*}
&\dim_H\left(E\cap\{x\mid \uad(\mathcal H,x))<\alpha_{0}\}\right)\\
&\le \max_{0\le k\le n-1}\{0,f\left(\alpha_{n,k+1}\right)-\gamma\alpha_{n,k+1}\}+\frac{\gamma(\alpha_0-\alpha_{\min})}{n}\,,
\end{align*}
where for each $n\in\N$, $\alpha_{\min}=\alpha_{n,0}<\alpha_{n,1}\le\ldots<\alpha_{n,n}=\alpha_0$ are equally spaced points on $[\alpha_{\min},\alpha_{0}]$.} Letting $n\rightarrow\infty$ and using the continuity of $f$ on $]\alpha_{\min},\alpha_{\max}[$ once more, finally yields the almost sure upper bound 
\begin{equation*}
\dim_H\left(E\cap\{x\mid \uad(\mathcal H,x))<\alpha_0\}\right)\le m(\gamma)\,.
\end{equation*}
{\color{blue} Combining with \eqref{gea0} we have that almost surely $\dim_H(E)\le m(\gamma)$, as required.}

If $m(\gamma)<0$, a straightforward modification of the argument using the latter claim of Lemma \ref{lemma:up} implies $E=\varnothing$ almost surely.

To prove the lower bound, let $\varepsilon>0$ and pick $\alpha$ such that 
\[{\color{blue}m(\gamma)+\varepsilon>\sup_{\alpha\ge0}f(\alpha)-\gamma \alpha>0}\] 
{\color{cyan} By Frostman's lemma,  there exists a probability measure $\mu_\alpha$ supported on $X$ such that $\mu_\alpha(\Delta(\alpha))=1$ and further
\begin{equation*}
\int\int d(x,y)^{\varepsilon-f(\alpha)}d\mu_\alpha(x)d\mu_\alpha(y)<C<\infty\,.
\end{equation*}
}
Consider $\nu_t$ as in \eqref{nut} and $\nu$ such that $\nu_t\rightharpoonup\nu$.
Lemma \ref{lemma:lb} implies that with positive probability $\nu(X)>0$ and further (applying the lemma with  {\color{blue}$f(\alpha)+\varepsilon$} and letting $\varepsilon\downarrow 0$) {\color{blue} almost surely}
\[\ldimloc(\nu,x)\ge m(\gamma)\,,\]
for $\nu$-almost all $x\in X$. Since $\supp(\nu)\subset E$, this shows in particular that $\dim_H(E)\ge m(\gamma)$ with positive probability.
\end{proof}

\begin{remark}
The method presented {\color{blue} in this section} works for more general gauge functions $h\colon(0,1)\rightarrow(0,+\infty)$ and measures $\mathcal{H}$ so that $C^{-1}<\mathcal{H}(B(x,r))/h(r)<C$ for some $C<\infty$. In this case the Poissonian intensity is $\gamma\mathcal{H}(dx)\times\tfrac{dr}{r h(r)}$ {\color{blue} and the $h$-average densities are defined via \[\ad_h(\mathcal{H},x,r)=\frac{\int_{r=t}^1\mathcal{H}(B(x,r))(rh(r))^{-1}\,dr}{-\log t}\,.\]} 
In the above, we have considered the case $h(r)=r^Q$, for simplicity of notation and because {\color{blue} our main applications,} the self-conformal measures in Section \ref{sec:applications}, are $Q$-regular.
\end{remark}

\section{Application to self-conformal spaces}\label{sec:applications}

Let $M$ be a $d$-dimensional Riemann manifold and $G=\{g_i\}_{i=1}^\ell$ a conformal iterated function system (IFS) of class $C^{1+\varepsilon}$ on $M$, i.e., $g_i$ are conformal contractions with tangent maps satisfying a H\"older
condition of exponent $\varepsilon$. Let $X\subset M$ be the self-conformal set corresponding to $G$, that is, $X$ is the unique compact set satisfying $X=\bigcup_{i=1}^\ell g_i(X)$. We suppose that the IFS $G$ satisfies the strong separation condition {\color{red}(SSC)}, i.e., $g_i(X)\cap g_j(X)=\emptyset$ for $i\neq j$. Let $S:X\to X$ be the inverse map of $G$ on $X$, that is, the restriction of $S$ on $g_i(X)$ is $g_i^{-1}$. Then $(X,S)$ becomes a dynamical system. It is well known that (see e.g. \cite[Chapter 5]{Falconer97}) there exists a unique probability measure $\mathcal{H}$ on $X$, called the natural measure, which is $S$-invariant, ergodic and $Q$-regular, $Q$ being the Hausdorff dimension of $X$.

{\color{red} We consider the Poisson cutout set $E$ in $X$ as defined in the Introduction (see \eqref{eq definition of Poisson cutout}). Recall that the intensity of the Poisson process $\mathcal{Y}$ is $\gamma \mathcal{H}\times \rho$ where $\mathcal{H}$ is the natural measure on $X$ {\color{cyan} and $\rho(dr)=r^{-Q-1}dr$.}

We will apply our main result (Theorem \ref{main theorem}) to determine the cutout dimension in the situation at hand. 
}

Instead of considering the continuous sequence $\{\ad(\mathcal{H},x,r),r>0\}$,   we will use the discrete one $\{\ad(\mathcal{H},x,|DS^n(x)|^{-1}),n\in \N\}$, where $DS^n$ is the tangent map of $S^n$.  Since $|DS^{n+1}(x)|/|DS^n(x)|=|DS(S^n(x))|\in (1,\max_y |DS(y)|)$ for all $n\geq 1$, the limit behavior of  $\ad(H,x,r)$ when $r\to0$ is the same as that of $\ad(\mathcal{H},x,|DS^n(x)|^{-1})$ when $n\to\infty$.

We write 
$$\ad(\mathcal{H},x,|DS^n(x)|^{-1})=\frac{1}{\log|DS^n(x)|}\sum_{k=0}^{n-1}f_k(x),$$
where 
\begin{equation*}
f_k(x)=\int_{|DS^{k+1}(x)|^{-1}}^{|DS^k(x)|^{-1}}\mathcal{H}(B(x,t))t^{-Q-1}\,dt\,.
\end{equation*} 
In our context, it is known that (see e.g.  \cite[Proposition 4.1]{Falconer92}, \cite[Chapter 6.2]{Falconer97}) there exists a sequence $(\varepsilon_n)_n$ of positive reals with $  \varepsilon_n\to 0$ such that 
\begin{equation} \label{Asymptotic additive}
|f_n(S^kx)-f_{n+k}(x)|<\varepsilon_n 
\end{equation}
for all $x\in X$  and  all $k\geq 0$. 

Recall that $\Delta(\alpha)=\Delta^{\mathcal{H}}(\alpha)=\{x\in X\,:\,\ad(\mathcal{H},x)=\alpha\}$ and that
$f(\alpha)=\dim_H(\Delta(\alpha))$. 

{\color{red} To effectively apply Theorem \ref{main theorem} to the set $E$, we need to verify that the functions $\ad(\mathcal{H},x,r)$ and $f(\alpha)$ satisfy the hypothesis of Theorem \ref{main theorem}.}
We will make use of the multifractal properties of $\Delta(\alpha)$ that we present now. First, we introduce some notions and results.
For simplicity of presentation, we express these results in the context of self-conformal sets/measures, although they are valid in  much more general settings.

{\bf Notations.} Let $\Lambda=\{1,\cdots,\ell\}.$ Recall that $g_i$, for $i\in \Lambda$, are conformal contractions. For $u=u_1\cdots u_k\in \Lambda^k$ we write  $g_u=g_{u_1}\circ\cdots\circ g_{u_k}$. Let $X_u=g_u(X)$. Denote $\Lambda^*=\bigcup_{n\geq 1}\Lambda^n$ and for $u\in\Lambda^*$, let $[u]=\{(v_n)_{n\ge 1}\in\Lambda^\infty\,:\,v_1=u_1,\ldots, v_n=u_n\}$. For any $x\in X$, there exists $(u_n)_{n\geq1}\in \Lambda^\infty$ such that $\{x\}=\lim_n g_{u_1^n}(X)=:g_{u_1^\infty}(X)$ where we write $u_1^n=u_1\cdots u_n$. The transformation $S$ can be defined as $\{S(x)\}=g_{u_2^\infty}(X)$.

A sequence $\Phi=\{\varphi_n\}$ of functions $\varphi_n : X\to\R$ is called {\em asymptotically additive} if for each $\varepsilon >0$ there exists a continuous function $\varphi:X\to\R$  such that 
\begin{equation}\label{definition asymptotic additive}
\limsup_{n\to\infty}\frac1n\sup_{x\in X}|\varphi_n(x)-A_n\varphi(x)|<\varepsilon
\end{equation}
where $A_n\varphi=\sum_{k=0}^{n-1}\varphi\circ S^k$. If $\varphi_n=A_n\varphi$ for all $n$, then $\Phi$ is called {\em additive}.

As a consequence of \eqref{Asymptotic additive}, the sequence $\{\sum_{k=0}^{n-1}f_k\}_n$ is asymptotically additive.
Indeed, for any $\varepsilon>0$, there exists $N\geq1$ such that when $\varepsilon_N<\varepsilon$, then by \eqref{Asymptotic additive} we have
$$
\limsup_{n\to\infty}\frac1n\sup_{x\in X}|\sum_{k=0}^{n-1}f_k(x)-A_nf_N(x)|<\varepsilon_N.
$$

Now, we introduce the notion of pressure function.
Let $\Phi=\{\varphi_n\}_n$ be a sequence of continuous function $\varphi_n:X\to\R$. The pressure function associated to $\Phi$ is defined by
\begin{equation}\label{definition pressure}
P(\Phi)=\limsup_{n\to\infty} \frac{1}{n}\log \sum_{u\in \Lambda^n}\sup_{x\in X_u}\exp(\varphi_n(x)).
\end{equation}
Actually, when $\Phi$ is asymptotically additive, we can replace limsup by lim in the definition of $P(\Phi)$.  In fact, from the asymptotically additivity of $\varphi_n$, we deduce that for any $\varepsilon>0$ there exists $\varphi : X\to\R$ such that
\begin{equation}\label{equation asymptotic additive property1}
\sup_{x\in X}|\varphi_n(x)-A_n\varphi(x)|\le n\varepsilon, \ \ {\rm for }\ n\gg1.
\end{equation}
So, $B_n:=\sum_{v\in \Lambda^n}\sup_{x\in X_v}\exp(\varphi_n(x))=(Ce^{ n\varepsilon})^{\pm}\sum_{v\in \Lambda^n}\sup_{x\in X_v}\exp(A_n\varphi(x))$ for some constant $C>0$, where the notation $A=C^{\pm}B$ means that $C^{-1} B\le A\le C B$. Since the sequence $\tilde{B}_n:=\sum_{v\in \Lambda^n}\sup_{x\in X_v}\exp(A_n\varphi(x))$ is sub-multiplicative, 
the limit $\lim_n\frac1n\log  \tilde{B}_n$ exists. So we have $$|\liminf_n\frac1n\log  B_n-\limsup_n\frac1n\log  B_n |\leq \varepsilon.$$ 
Since  $\varepsilon$ is arbitrary, the limit $\lim_n\frac1n\log  B_n$ exists.


Let $\mathcal{M}(X,S)$ be the set of all $S$-invariant probability measures on $X$. For $\mu\in \mathcal{M}(X,S)$ and   an asymptotically additive sequence $\Phi=\{\varphi_n\}$, define $$\Phi_*(\mu):=\lim_{n\to\infty}\int_X\frac{\varphi_n(x)}{n}d\mu(x).$$
By \eqref{definition asymptotic additive}, the limit in the above definition exists. Note that since $\mu$ is $S$-invariant, we have $\int_X\frac{A_n\varphi(x)}{n}d\mu(x)=\int_X\varphi\,d\mu$ for all $n$.
(If $\mu$ is ergodic, then by Birkhoff's ergodic theorem we deduce that $\Phi_*(\mu)$ is the $\mu$-almost sure limit of
$\frac{\varphi_n(x)}{n}$ as $n\to\infty$).
 Further, it is  known (see \cite[Lemma A.4.]{FengHuang10}, \cite[Proposition 4]{BarreiraCaoWang13}) that the map $\mu\mapsto \Phi_*(\mu) $ is continuous in the weak-star topology.

Let us return to the set $\Delta(\alpha)$. Denote 
$F=\{\sum_{k=0}^{n-1}f_k\}_n$ and $\log DS=\{\log |DS^n|\}_n$. Then $F$ is asymptotically additive and $DS$ is additive. Let 
$$\Omega=\left\{\frac{F_*(\mu)}{\log DS_*(\mu)} : \mu\in \mathcal{M}(X,T)\right\}.$$

We will use the following multifractal properties (Proposition \ref{Prop multifractal properties}) of $\Delta(\alpha)$, most of them are from \cite[Theorem 1]{BarreiraCaoWang13} (see also \cite{FengHuang10}). Before presenting those properties, we need to introduce the notion of $u$-dimension. We will present this notion in our setting of self-conformal sets/measures.

Let $u:X\to \R^+$ be a continuous function. For each word $v\in \Lambda^n$, we write 
$$u(v)=\sup\left\{\sum_{k=0}^{n-1}u(S^kx) : x\in X_v\right\}.$$
Given a set $F\subset X$ and $\alpha\in \R$, we define
$$N(F,\alpha,u)=\lim_{n\to\infty}\inf_{\Gamma}\sum_{v\in \Gamma}\exp(-\alpha u(v))$$
where the infimum is taken over all countable collections $\Gamma\in \cup_{k\geq n}\Lambda^k$ such that $F\subset \cup_{v\in \Gamma}X_v$. The $u$-dimension of $F$ with respect to $S$ is defined by 
$$\dim_u(F)=\inf\left\{\alpha\in \R : N(F,\alpha,u)=0\right\}.$$
Note that if $u=\log|DS|$, 
then the $u$-dimension $\dim_u(F)$ coincides with the Hausdorff dimension $\dim_H(F)$. This follows immediately from the existence of constants $c_1,c_2>0$ such that $c_1({\rm diam}X_v )^\alpha\le \exp(-\alpha u(v))\le c_2({\rm diam}X_v )^\alpha$. 

\begin{prop}\label{Prop multifractal properties}
The following statements hold:
\begin{enumerate}
\item The set $\Omega$ is a closed interval. \label{multifractal range}

\item We have $\Delta(\alpha)\neq \emptyset$ if and only if $\alpha\in \Omega$ and if $\alpha\in\Omega$, then 
$$\dim_u(\Delta(\alpha))=\max\left\{\frac{h_\mu(S)}{\int_X u \ d\mu} : \mu\in \mathcal{M}(X,T) \  {\rm and}\ \frac{F_*(\mu)}{\log DS_*(\mu)} =\alpha\right\}.$$ 
In particular,
$$f(\alpha)=\max\left\{\frac{h_\mu(S)}{\int_X \log |DS| \ d\mu} : \mu\in \mathcal{M}(X,T) \  {\rm and}\ \frac{F_*(\mu)}{\log DS_*(\mu)} =\alpha\right\}.$$
Here, $h_\mu(S)$ denotes the measure-theoretic entropy of $\mu$ with respect to $S$.
\label{multifractal spectrum formula}

\item The function $f$ attains its maximum at some $\alpha_{min}<\alpha_0<\alpha_{max}$ and $f(\alpha_0)=Q$. \label{dimension-max}

\item $\ad(\mathcal{H},x)=\alpha_0$ for $\mathcal{H}$-almost all $x\in X$.  \label{dimension-max-measure}

\item The function  $f : {\rm int} (\Omega) \to \R$ is continuous. \label{continuity spectrum}

\item If 
$\alpha\in\Omega$, then
$$\inf_{q\in \R} P(q(F-\alpha \log DS)-f(\alpha)\log DS)=0.$$  \label{multifractal pressure zero point}

\item  {\color{cyan}For all $0<\beta\le\alpha_0$ and all $\varepsilon>0$, we have $\mathcal H (\Delta_r(0,\beta))\le C r^{Q-f(\beta)-\varepsilon}$ for all $r>0$.}\label{LD estimate}
\end{enumerate}
\end{prop}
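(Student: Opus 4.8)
The plan is to translate the measure bound into a thermodynamic large deviation count. Writing $F_n=\sum_{k=0}^{n-1}f_k$ and $u_n(x)=\log|DS^n(x)|$, recall that $\ad(\mathcal H,x,|DS^n(x)|^{-1})=F_n(x)/u_n(x)$, so $\Delta_r(0,\beta)$ is, up to passing to the discrete scales $|DS^n(x)|^{-1}$, the set where the Birkhoff-type ratio $F_n/u_n$ falls below $\beta$. First I would fix $r$ and introduce the Moran stopping family $\Gamma_r\subset\bigcup_k\Lambda^k$ of words $v$ with $\diam X_v\le r<\diam X_{v^-}$, where $v^-$ is $v$ with its last letter removed. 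By $Q$-regularity and the bounded distortion of the $C^{1+\varepsilon}$ system, $\mathcal H(X_v)\asymp(\diam X_v)^Q\asymp r^Q$ and $u(v)=\log(1/r)+O(1)$ uniformly over $v\in\Gamma_r$. Covering $\Delta_r(0,\beta)$ by those $X_v$, $v\in\Gamma_r$, that it meets, and using \eqref{Asymptotic additive} together with $|DS^{n+1}|/|DS^n|\in(1,\max_y|DS(y)|)$ to replace $\ad(\mathcal H,x,r)$ by $F_{|v|}(x)/u_{|v|}(x)$ on each such $X_v$ up to an error $o(1)$ (absorbed, via the continuity of $f$ from part \eqref{continuity spectrum}, into an arbitrarily small shift of the threshold), the problem reduces to the counting estimate
\[ \#\{v\in\Gamma_r:\ \exists\,x\in X_v,\ F_{|v|}(x)<\beta\,u_{|v|}(x)\}\le C\,r^{-f(\beta)-\varepsilon}\,, \]
which would yield $\mathcal H(\Delta_r(0,\beta))\lesssim r^{Q-f(\beta)-\varepsilon}$.

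For the count I would use a Chebyshev inequality tilted by a parameter $q\le 0$. Since on each counted word the witnessing point satisfies $F_{|v|}-\beta u_{|v|}<0$, and since the pressure is defined through suprema $\sup_{X_v}\exp(\cdot)$ which for $q\le 0$ equals $\exp(q\inf_{X_v}(\cdot))\ge 1$, the count is bounded by the stopping-time pressure sum $\sum_{v\in\Gamma_r}\sup_{X_v}\exp\!\big(q(F_{|v|}-\beta u_{|v|})\big)$. The engine is a Bowen-type identity for Moran covers: if $s(q)$ is the unique root of $P\big(q(F-\beta\log DS)-s\log DS\big)=0$, then this sum is $\asymp r^{-s(q)}$, where the $\asymp$ absorbs only $r^{\pm\varepsilon}$ errors coming from the asymptotic-additivity defect $e^{\pm|v|\varepsilon}$ and the $O(1)$ fluctuation of $u(v)$. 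I would then minimize over $q\le 0$ and invoke part \eqref{multifractal pressure zero point} of Proposition \ref{Prop multifractal properties}, namely $\inf_qP\big(q(F-\beta\log DS)-f(\beta)\log DS\big)=0$: writing $G(q,s)=P\big(q(F-\beta\log DS)-s\log DS\big)$, which is jointly convex and strictly decreasing in $s$, this identity forces $\min_q s(q)=f(\beta)$, attained at the minimizing $q_*$. Finally I would verify $q_*\le 0$, which is exactly where $\beta\le\alpha_0$ enters: since $s(0)=Q$ (Bowen's equation) with equilibrium state $\mathcal H$, and $F_*(\mathcal H)/\log DS_*(\mathcal H)=\alpha_0$ by part \eqref{dimension-max-measure}, the derivative $s'(0)$ has the sign of $\alpha_0-\beta\ge 0$, so convexity of $q\mapsto s(q)$ places its minimizer at $q_*\le 0$. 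This gives the count $\le C\,r^{-f(\beta)-\varepsilon}$.

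The boundary regime $\beta<\alpha_{\min}$, where $\Delta(\beta)=\varnothing$ and $f(\beta)=0$, is not covered by part \eqref{multifractal pressure zero point}, and there I would argue directly from the variational principle for asymptotically additive potentials (\cite{FengHuang10}): $P\big(q(F-\beta\log DS)\big)=\sup_\mu\{h_\mu(S)+q(F_*(\mu)-\beta\log DS_*(\mu))\}$, and since $\inf_\mu F_*(\mu)/\log DS_*(\mu)=\alpha_{\min}>\beta$ while $\log DS_*(\mu)$ is bounded below by a positive constant, the bracketed quantity is uniformly positive; hence this pressure tends to $-\infty$ as $q\to-\infty$, and choosing $q$ with $P(q(F-\beta\log DS))<0$ forces the root $s(q)<0$, whence $\mathcal H(\Delta_r(0,\beta))\lesssim r^{Q-s(q)}\le r^Q\le C r^{Q-\varepsilon}$. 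Values of $r$ bounded away from $0$ are trivial after enlarging $C$, as $\mathcal H(\Delta_r(0,\beta))\le\mathcal H(X)=1$.

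I expect the main obstacle to be the Bowen-type identity linking the stopping-time (Moran cover) sums to the fixed-$n$ pressure for merely asymptotically additive potentials, and the attendant bookkeeping needed to keep every error at the level $r^{\pm\varepsilon}$ uniformly in $r$ — both the asymptotic-additivity defect and the $o(1)$ discrepancy between $\ad(\mathcal H,x,r)$ and $F_{|v|}/u_{|v|}$ on $\Gamma_r$. The one genuinely structural point is the sign analysis $q_*\le 0$, which is the precise place the hypothesis $\beta\le\alpha_0$ is used.
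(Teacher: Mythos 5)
Your proposal addresses only part \eqref{LD estimate}, taking parts \eqref{dimension-max-measure}--\eqref{multifractal pressure zero point} as inputs; this matches the paper, which obtains parts \eqref{multifractal range}--\eqref{multifractal pressure zero point} from the literature (Barreira--Cao--Wang for \eqref{multifractal spectrum formula}, \eqref{continuity spectrum}, \eqref{multifractal pressure zero point}, Falconer for \eqref{dimension-max}, \eqref{dimension-max-measure}, a short compactness--convexity remark for \eqref{multifractal range}) and proves only \eqref{LD estimate} in its Appendix, so your reliance on those parts is not circular. For \eqref{LD estimate} your core strategy coincides with the paper's: an exponentially tilted Chebyshev bound (your $q\le 0$ is the paper's $\lambda=-q\ge 0$), conversion of the resulting sum into a pressure, identification of the optimal exponent via part \eqref{multifractal pressure zero point}, and use of $\beta\le\alpha_0$ through the natural measure $\mathcal H$ to place the optimizing tilt on the admissible side. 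The scaffolding differs: you use a single Moran cut-set $\Gamma_r$ with variable word lengths and the pressure of $(X,S)$ itself, whereas the paper builds induced systems $(X,S_\tau)$ with words of fixed length $n$ and takes $n\to\infty$ and then $\tau\to 0$, applying \eqref{multifractal pressure zero point} to $(X,S_\tau)$; and for $\beta<\alpha_{\min}$ your variational-principle argument is a legitimate variant of the paper's empirical-measure proof that $\Delta_r(0,\beta)=\varnothing$ for small $r$.

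There are, however, two genuine gaps. (i) Your ``engine'', the Bowen-type identity $\sum_{v\in\Gamma_r}\sup_{X_v}\exp\bigl(q(F_{|v|}-\beta u_{|v|})\bigr)\asymp r^{-s(q)}$, is asserted rather than proved, and it is exactly the step the paper's induced-system construction exists to supply. The half you actually need (the upper bound with $s=s(q)+\varepsilon'$) is true and provable without Gibbs theory: by bounded distortion every $v\in\Gamma_r$ has $|v|\asymp\log(1/r)$, so split $\Gamma_r$ by word length, dominate the layer of length $n$ by the full partition sum $\sum_{v\in\Lambda^n}\sup_{X_v}\exp\bigl(q(F_n-\beta u_n)-su_n\bigr)$, use that $\tfrac1n\log$ of this converges to $P\bigl(q(F-\beta\log DS)-s\log DS\bigr)\le-\varepsilon'\min_X\log|DS|<0$ (the limit exists for asymptotically additive sequences, as shown in Section 3 of the paper), and sum the geometric series to get a bound uniform in $r$. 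The two-sided $\asymp$ you state is stronger than needed and delicate for merely asymptotically additive potentials, so prove only this half. Relatedly, your sign analysis invokes differentiability of $q\mapsto s(q)$ at $0$, which in this generality needs justification; it is simpler to test the variational principle with $\mu=\mathcal H$: for $q\ge 0$, $G(q,Q)\ge h_{\mathcal H}(S)+q\bigl(F_*(\mathcal H)-\beta\log DS_*(\mathcal H)\bigr)-Q\log DS_*(\mathcal H)=q\log DS_*(\mathcal H)(\alpha_0-\beta)\ge 0$, hence $s(q)\ge Q\ge f(\beta)$ for all $q\ge 0$ --- this is exactly the paper's Jensen step in different clothing. (ii) Your threshold shift from $\beta$ to $\beta+o(1)$ is absorbed ``via the continuity of $f$ from part \eqref{continuity spectrum}'', but \eqref{continuity spectrum} gives continuity only on ${\rm int}(\Omega)$: at the endpoint $\beta=\alpha_{\min}$, which the statement includes, you need upper semicontinuity of $f$ at $\alpha_{\min}$. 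This is true (compactness of $\mathcal M(X,S)$, upper semicontinuity of entropy, continuity of $\mu\mapsto(F_*(\mu),\log DS_*(\mu))$), but it is not among the listed items and must be argued; alternatively, adopt the paper's one-sided discretization, which bounds $\ad(\mathcal H,x,r)$ from below by the quotient with numerator truncated at scale $|DS_\tau^{n}(x)|^{-1}$ and denominator $\log|DS_\tau^{n+1}(x)|$, so that no threshold shift, and hence no continuity of $f$, is ever needed.
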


\begin{proof}  
The statements \eqref{multifractal spectrum formula}, \eqref{continuity spectrum} and \eqref{multifractal pressure zero point} can be found in \cite[Theorem 1]{BarreiraCaoWang13}. Note that the definition of pressure function given in \cite{BarreiraCaoWang13} is different from ours, but these two definitions actually give the same pressure function (see \cite[Sections 2.2 and 4.2.2]{Barreirabook}, \cite[Proposition 3]{PesinPitskel84}).

The statements \eqref{dimension-max} and \eqref{dimension-max-measure} can be deduced from \cite{Falconer92}: in Proposition 4.1 of \cite{Falconer92} it is proved that there exists a constant $\alpha_0>0$ such that $A(\mathcal{H},x) =\alpha_0$ for $\mathcal{H}$-almost every $x$, so $f(\alpha_0)=\dim(\mathcal{H})=\dim_H(X)=Q$ which is the maximum of $f$.

For the statement \eqref{multifractal range}, since  the map $\mu\mapsto \frac{F_*(\mu)}{\log DS_*(\mu)}$ is continuous and $\mathcal{M}(X,T)$ is a compact and convex set,  we only need to notice that a subset of $\R$, which is the image of a compact convex set under a continuous map, must be a closed interval.  

The proof of \eqref{LD estimate} will be given in Appendix \ref{section Appendix}, see Lemma \ref{lemma LD}.
\end{proof}

{\color{red} Now, we can show the main application of this paper: we determine the cutout dimension of the Poisson cutout set $E$ in the context of the self-conformal set $X$. }

\begin{theorem}\label{thm:main}
{\color{blue}Suppose that $E$ is the Poissonian cutout set on $X$, where the intensity is $\gamma\mathcal{H}\times\rho$, and $\mathcal{H}$ is the natural self-conformal measure on $X$.}
If {\color{blue}$m=m(\gamma)=\max_{\alpha_{min}\le \alpha\le \alpha_0}f(\alpha)-\gamma \alpha\ge 0$, } then almost surely $\dim_H(E)\le m$ and $\dim_H(E)\ge m$ with positive probability. If $m<0$, then $E=\varnothing$ almost surely.
\end{theorem}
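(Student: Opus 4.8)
The plan is to deduce Theorem \ref{thm:main} directly from the general result Theorem \ref{main theorem} applied to the pair $(X,\mathcal{H})$, so that the real work reduces to (a) checking that the two hypotheses of Theorem \ref{main theorem} hold in the self-conformal setting and (b) reconciling the two resulting expressions for the cutout dimension. First I would record that, by Proposition \ref{Prop multifractal properties}\eqref{multifractal range} and \eqref{multifractal spectrum formula}, the set $\Omega$ is a closed interval and $\Delta(\alpha)\neq\varnothing$ precisely when $\alpha\in\Omega$; since $\alpha_{\min}=\inf\{\alpha:\Delta(\alpha)\neq\varnothing\}$ and $\alpha_{\max}=\sup\{\alpha:\Delta(\alpha)\neq\varnothing\}$, this forces $\Omega=[\alpha_{\min},\alpha_{\max}]$ and hence $\mathrm{int}(\Omega)=\,]\alpha_{\min},\alpha_{\max}[$. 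Consequently the continuity of $f$ on $]\alpha_{\min},\alpha_{\max}[$ demanded by Theorem \ref{main theorem} is exactly Proposition \ref{Prop multifractal properties}\eqref{continuity spectrum}, while the large deviation hypothesis \eqref{LD condition} is verbatim Proposition \ref{Prop multifractal properties}\eqref{LD estimate}. Thus both hypotheses of Theorem \ref{main theorem} are in force.

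It then remains to reconcile the two formulas for the cutout dimension: Theorem \ref{main theorem} produces the value $m(\gamma)=\sup_{\alpha\ge0}f(\alpha)-\gamma\alpha$ of \eqref{introduction equation formula dim of cutout}, whereas Theorem \ref{thm:main} asserts the answer is $\max_{\alpha_{\min}\le\alpha\le\alpha_0}f(\alpha)-\gamma\alpha$. Here I would argue that the range of the supremum collapses as follows. Interpreting the supremum as running only over those $\alpha$ with $\Delta(\alpha)\neq\varnothing$, and noting that $\alpha_{\min}\ge c_0>0$ by $Q$-regularity (cf. Remarks \ref{rem:coarse}), the supremum is in fact taken over $[\alpha_{\min},\alpha_{\max}]$. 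By Proposition \ref{Prop multifractal properties}\eqref{dimension-max}, $f$ attains its global maximum $f(\alpha_0)=Q$ at the interior point $\alpha_0<\alpha_{\max}$, so for every $\alpha\in\,]\alpha_0,\alpha_{\max}]$ one has $f(\alpha)-\gamma\alpha\le Q-\gamma\alpha<Q-\gamma\alpha_0=f(\alpha_0)-\gamma\alpha_0$. Hence the supremum is already attained on $[\alpha_{\min},\alpha_0]$, and the concavity of $f$ (which follows from the conditional variational formula of Proposition \ref{Prop multifractal properties}\eqref{multifractal pressure zero point}) guarantees that this supremum is genuinely a maximum. Therefore $\sup_{\alpha\ge0}f(\alpha)-\gamma\alpha=\max_{\alpha_{\min}\le\alpha\le\alpha_0}f(\alpha)-\gamma\alpha=m$, and the two quantities coincide.

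With both hypotheses verified and the two values of $m$ identified, the conclusion of Theorem \ref{thm:main}---the almost sure upper bound $\dim_H(E)\le m$ together with $\dim_H(E)\ge m$ with positive probability when $m\ge0$, and $E=\varnothing$ almost surely when $m<0$---is precisely the conclusion of Theorem \ref{main theorem}. The genuine difficulty of the whole section lies not in this last deduction but in establishing Proposition \ref{Prop multifractal properties}, in particular the large deviation estimate \eqref{LD estimate} deferred to the Appendix. Within the proof of Theorem \ref{thm:main} itself, the one point requiring care---and the only mild obstacle I anticipate---is the reduction of the supremum to the compact range $[\alpha_{\min},\alpha_0]$ and the attainment of the maximum there, which hinges on knowing that $f$ is maximized at $\alpha_0$ with $f(\alpha_0)=Q$ and is upper semicontinuous (via concavity) up to the endpoint $\alpha_{\min}$.
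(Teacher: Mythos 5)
Your proposal is correct and follows essentially the same route as the paper: the paper's proof of Theorem \ref{thm:main} is precisely the one-line application of Theorem \ref{main theorem}, with its two hypotheses supplied by Proposition \ref{Prop multifractal properties}\eqref{continuity spectrum} and \eqref{LD estimate}, and your reconciliation of $\sup_{\alpha\ge 0}f(\alpha)-\gamma\alpha$ with $\max_{\alpha_{\min}\le\alpha\le\alpha_0}f(\alpha)-\gamma\alpha$ (discarding $\alpha>\alpha_0$ via $f\le Q=f(\alpha_0)$ and $\gamma>0$, and restricting to $\Omega=[\alpha_{\min},\alpha_{\max}]$) is exactly the detail the paper leaves implicit. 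The only shaky point is your appeal to concavity for attainment of the maximum---concavity alone does not force upper semicontinuity of $f$ at the endpoint $\alpha_{\min}$---but attainment follows instead from the variational formula in Proposition \ref{Prop multifractal properties}\eqref{multifractal spectrum formula}, so the conclusion stands.
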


\begin{proof}
This is a consequence of \eqref{continuity spectrum} and  \eqref{LD estimate} of  Proposition \ref{Prop multifractal properties} and Theorem \ref{main theorem}.
\end{proof}

\medskip

\begin{ex}

{\color{red}Let $X\subset \R^d$ be  a self-similar set satisfying the strong separation condition.  Suppose that the maps $\{g_i\}_{i=1}^\ell$ have equal contraction ratios} (For instance, $X$ could be the classical ternary Cantor set), that is, there is constant $0<a<1$ such that $|g_i'|=a$ for all $i,j\in \Lambda$.  Then, in this case, $|DS|$ is constant on $X$ and $F$ is an additive sequence (see \cite[Chapter 6.2]{Falconer97}). Moreover $F$ is H\"older continuous. It is well known that (see e.g. \cite{Fan1994,RuelleBook,PesinWeiss97} )
 the multifractal spectrum $f(\alpha)$ is analytical, strictly convex on $\Omega$ and for any $\alpha\in\Omega$ we have
\begin{equation}\label{pressure spectrum Legendre}
f(\alpha)=\inf_{q\in \R}\left(\tilde{P}(q)-\alpha q\right)
\end{equation}
where $\tilde{P}(q)=\frac{P(qF)}{-\log a}$. We make two remarks:
\begin{enumerate}
\item Observe that since $f'(\alpha_0)=0$, we have $m(\gamma)>Q-\gamma \alpha_0$. Thus, the almost sure dimension of $E$ is not due to the $\mathcal H$-almost sure value of $\ad(\mathcal H,x)$ but is affected by the multifractal behaviour of the average densities. 
\item From \eqref{pressure spectrum Legendre}, one can show that $m(\gamma)=\tilde{P}(-\gamma)=\frac{P(-\gamma F)}{-\log a}$. This means that the critical value (regarding the parameter $\gamma$) for the emptiness (or for the positivity of the Hausdorff dimension) of $E$ is the unique zero of the pressure function (the pressure function in our case is strictly monotone).
\end{enumerate}

\end{ex}

\begin{remark}\label{rems:final}




 It seems plausible that in Theorem \ref{thm:main}, $\dim_H(E)$ is equal to the cutout dimension almost surely conditioned on $E\neq\varnothing$. In other words, $\mathbb{P}(E\neq\varnothing\text{ and }\dim_H (E)<m)=0$. However, the proof only implies $\dim_H(E)=m$ almost surely on $\nu(X)>0$, where $\nu$ is the random measure as in Lemma \ref{lemma:lb} corresponding to the value of $\alpha$ so that $m=f(\alpha)-\gamma\alpha$. We expect that $\mathbb{P}(\nu(X)=0\text{ and }E\neq\varnothing)=0$, but haven't been able to prove this. As pointed out in \cite{ShmerkinSuomala}, this problem is open also in the case of $X=[0,1]$, $\mathcal{H}=\mathcal{L}$.
\end{remark}


\appendix
\section{}\label{section Appendix}

In this Appendix, we give the proof of the following lemma which is 
the statement \eqref{LD estimate} of Proposition \ref{Prop multifractal properties}.

\begin{lemma} \label{lemma LD}
Under the setting of Proposition \ref{Prop multifractal properties}, we have
{\color{cyan}
$$ \mathcal H (\Delta_r(0,\beta))\le C r^{Q-f(\beta)-\varepsilon}$$ for all $r>0$ whenever $0<\beta\le\alpha_0$ and all $\varepsilon>0$. Here $C$ is a finite constant that is allowed to depend on $\beta$ and $\varepsilon$ (but not on $r$!).}
\end{lemma}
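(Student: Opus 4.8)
The plan is to run a thermodynamic large--deviations argument: tilt the indicator of the sublevel set by an exponential weight, turn the $\mathcal{H}$--measure into a sum over cylinders of diameter comparable to $r$, and read off the exponential rate from the pressure identity in part~\eqref{multifractal pressure zero point}. The case $\beta=\alpha_0$ is trivial, since $f(\alpha_0)=Q$ (part~\eqref{dimension-max}) and $\mathcal{H}(\Delta_r(0,\alpha_0))\le\mathcal{H}(X)\le C r^{-\varepsilon}$; so assume $0<\beta<\alpha_0$. Writing $\psi=\log|DS|$, so that $A_n\psi=\log|DS^n|$, bounded distortion and $Q$--regularity give, for $v\in\Lambda^n$ and $x\in X_v$, the comparisons $\diam X_v\asymp e^{-A_n\psi(x)}$ and $\mathcal{H}(X_v)\asymp(\diam X_v)^Q\asymp e^{-QA_n\psi(x)}$. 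Fixing $r$, I would take the stopping--time family $\mathcal{P}_r$ of cylinders $X_v$ with $\diam X_v\le r<\diam X_{v'}$ ($v'$ the parent word); by the SSC these are pairwise disjoint, cover $X$, satisfy $\diam X_v\asymp r$, and have lengths $n=|v|$ ranging over an interval $I_r$ of at most $O(\log(1/r))$ integers. Since $\Delta_r(0,\beta)$ is covered by those $X_v\in\mathcal{P}_r$ it meets, and each of these has $\mathcal{H}(X_v)\asymp r^Q$, it suffices to bound the number $N_r$ of cylinders of $\mathcal{P}_r$ meeting $\Delta_r(0,\beta)$ by $Cr^{-f(\beta)-\varepsilon}$.

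Next I would convert the analytic condition into a dynamical one. For $x\in\Delta_r(0,\beta)$ with stopping length $n=n(x)$, the scales $r$ and $|DS^n(x)|^{-1}$ are comparable, and \eqref{Asymptotic additive} controls $F_n:=\sum_{k=0}^{n-1}f_k$, so $\ad(\mathcal{H},x,r)<\beta$ upgrades to $F_n(x)-(\beta+\eta)A_n\psi(x)<0$ with an error $\eta=\eta(r)\downarrow0$ as $r\downarrow0$; I take $r$ small enough that $\beta+\eta<\alpha_0$ and, at the end, use the continuity of $f$ on the interior of $\Omega$ (part~\eqref{continuity spectrum}) to replace $f(\beta+\eta)$ by $f(\beta)$ up to the single $\varepsilon$. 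Now comes the tilting: set $\beta'=\beta+\eta$ and let $q=q_{\beta'}$ be the minimizer in part~\eqref{multifractal pressure zero point}, so that $P\big(q(F-\beta'\log DS)-f(\beta')\log DS\big)=0$; hence $\sum_{v\in\Lambda^n}\sup_{X_v}\exp\Psi^q_n\le e^{n\delta}$ for all large $n$ and any $\delta>0$, where $\Psi^q_n=q\big(F_n-\beta'A_n\psi\big)-f(\beta')A_n\psi$. The decisive point is that $q_{\beta'}\le0$ whenever $\beta'\le\alpha_0$: part~\eqref{multifractal pressure zero point} exhibits $f$ as the Legendre transform $f(\beta')=\inf_q(\tau(q)-q\beta')$ of the convex, decreasing Bowen function $\tau$ determined by $P(qF-\tau(q)\log DS)=0$ (with $\tau(0)=Q$), so $f'=-q_{\beta'}$ and the monotonicity of $f$ on $[\alpha_{\min},\alpha_0]$ (part~\eqref{dimension-max}) forces $q_{\beta'}\le0$.

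Consequently, at a point $x_v\in X_v\cap\Delta_r(0,\beta)$ one has $F_n-\beta'A_n\psi<0$, so $\exp\big(q(F_n-\beta'A_n\psi)\big)\ge1$, and together with $A_n\psi\asymp-\log r$ this gives $\sup_{X_v}\exp\Psi^q_n\ge\exp\Psi^q_n(x_v)\gtrsim r^{f(\beta')}$ for every cylinder meeting the set. Summing over $\mathcal{P}_r$ and over $n\in I_r$ then yields $N_r\,r^{f(\beta')}\lesssim\sum_{n\in I_r}\sum_{v\in\Lambda^n}\sup_{X_v}\exp\Psi^q_n\le|I_r|\,e^{\delta\max I_r}\le r^{-\varepsilon}$ for $\delta$ small and $r$ small, whence $N_r\le Cr^{-f(\beta')-\varepsilon}\le C'r^{-f(\beta)-\varepsilon}$ and finally $\mathcal{H}(\Delta_r(0,\beta))\le Cr^QN_r\le C''r^{Q-f(\beta)-\varepsilon}$, as desired. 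For $r$ bounded away from $0$ the claim is trivial from $\mathcal{H}(\Delta_r)\le\mathcal{H}(X)$.

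I expect the principal difficulty to be twofold. Conceptually, the whole estimate hinges on the sign $q_{\beta'}\le0$, which is exactly what makes the Chebyshev tilt point in the covering direction, and is where the hypothesis $\beta\le\alpha_0$ enters through the Legendre/Bowen structure of part~\eqref{multifractal pressure zero point}. Technically, the delicate step is the uniform passage from the continuous average $\ad(\mathcal{H},x,r)$ to the Birkhoff--type ratio $F_n/A_n\psi$ --- that is, simultaneously controlling the scale mismatch between $r$ and $|DS^n(x)|^{-1}$ and the asymptotic--additivity error of \eqref{Asymptotic additive}, uniformly in $x\in X$ --- so that the $o(n)$ and $O(\log(1/r))$ losses, together with the correction $f(\beta+\eta)-f(\beta)$, are all absorbed into the single $\varepsilon$. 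The range $0<\beta<\alpha_{\min}$, where the Legendre--extended $f(\beta)$ may be negative, is handled by the same computation using the opposite side of the comparison $A_n\psi\asymp-\log r$.
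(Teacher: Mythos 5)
Your core strategy is the paper's own: tilt the sublevel condition exponentially, organize the estimate over cylinders of diameter $\approx r$, bound the tilted sum by a pressure, and feed in the identity of Proposition \ref{Prop multifractal properties}\eqref{multifractal pressure zero point}. (Your stopping-time family $\mathcal{P}_r$ replaces the paper's induced systems $(X,S_\tau)$ with $\tau\to0$; that difference is harmless, since uniform bounded distortion makes the scale-conversion errors $O(1)$.) However, two steps are genuine gaps. The first is the sign claim $q_{\beta'}\le 0$, which you yourself call decisive. Your derivation --- $f(\beta')=\inf_q(\tau(q)-q\beta')$ for a Bowen function $\tau$, $f'=-q_{\beta'}$, and monotonicity of $f$ on $[\alpha_{\min},\alpha_0]$ --- rests on facts the Proposition does not provide: part \eqref{dimension-max} only locates the maximum of $f$ at $\alpha_0$ and gives no monotonicity; $f$ is not known to be differentiable, nor even concave, for these asymptotically additive quotient spectra, so ``$f'=-q_{\beta'}$'' is undefined; and part \eqref{multifractal pressure zero point} is only an inf-formula, not the two-sided Legendre duality you invoke (also, your $\tau$ is increasing, not decreasing, since $F\ge 0$). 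Moreover, even granting $P\ge 0$ on $\{q>0\}$, this together with $\inf_{q\in\R}P=0$ does \emph{not} force near-minimizers into $\{q\le 0\}$: a convex function vanishing only at some $q^*>0$ satisfies both. What is needed is the strictly positive, linearly growing lower bound $P\bigl(q(F-\beta'\log DS)-f(\beta')\log DS\bigr)\ge \log DS_*(\mathcal{H})\bigl[(Q-f(\beta'))+q(\alpha_0-\beta')\bigr]$ for $q\ge 0$, obtained by testing with the natural measure and part \eqref{dimension-max-measure}, plus convexity/continuity of $q\mapsto P$. The paper arranges the proof precisely so as to avoid locating the minimizer at all: it proves the Chebyshev inequality $\limsup_n\tfrac1n\log\mathcal{H}(A_{\tau,n})\le P_\tau(\lambda(\beta\log DS_\tau-F_\tau)-Q\log DS_\tau)$ for $\lambda\ge0$ (your $q=-\lambda$), and for $\lambda<0$ notes the same inequality holds trivially because the right-hand side is nonnegative, by Jensen's inequality applied to $\mathcal{H}$ together with part \eqref{dimension-max-measure} and $\beta\le\alpha_0$; only then does it take the infimum from part \eqref{multifractal pressure zero point}. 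As written, your key step is unsupported.

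Second, the range $0<\beta<\alpha_{\min}$ is a genuine case (it is exactly what the proof of Theorem \ref{main theorem} uses to rule out points with $\uad(\mathcal H,x)<\alpha_{\min}$), and your one-sentence dismissal does not work: for $\beta\notin\Omega$ part \eqref{multifractal pressure zero point} is unavailable (there is no minimizer $q_\beta$, and $f(\beta)=\dim_H\varnothing=0$), while inclusion into $\Delta_r(0,\beta_2)$ with $\beta_2\downarrow\alpha_{\min}$ only gives the exponent $Q-f(\beta_2)-\varepsilon$, and $\lim_{\beta_2\downarrow\alpha_{\min}}f(\beta_2)$ may be strictly positive. The paper devotes a separate argument to this case: it shows $\Delta_r(0,\beta)=\varnothing$ for all small $r$ by proving $\lim_n\inf_{x\in X}\frac{\sum_{k=0}^{n-1}f_k(x)}{\log|DS^n(x)|}=\alpha_{\min}$, via weak-star limits of empirical measures and \cite[Lemma A.4]{FengHuang10}. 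Your machinery could be patched instead --- for $\beta<\alpha_{\min}$ one has $F_*(\mu)-\beta\log DS_*(\mu)$ bounded below away from $0$ uniformly on $\mathcal{M}(X,S)$, whence $P(q(F-\beta\log DS))\to-\infty$ as $q\to-\infty$ --- but some such argument must be supplied; as written, this case is missing.
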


{\bf Notations and classical estimates.}
For $u\in \Lambda^*$, let $\tilde{u}$ be the word obtained by erasing the last letter. For $0<\tau<1$, consider the ``cut-set"
$$W_\tau=\{u\in\Lambda^* : \ {\rm diam}(g_u(X))\leq \tau \ {\rm and}\  {\rm diam}(g_{\tilde{u}}(X))>\tau\}.$$

It is clear that for any $0<\tau<1$, $\Lambda^\infty=\bigsqcup_{u\in W_\tau}[u]$ and the IFS $\{g_u\}_{u\in W_\tau}$ generates the same attractor $X$, moreover $\mathcal{H}$ is the natural measure associated to $\{g_u\}_{u\in W_\tau}$. For any $x\in X$, there exists $(v_n)_{n\geq1}\in W_\tau^\infty$ such that $\{x\}=\lim_n g_{v_1^n}(X)=:g_{v_1^\infty}(X)$. We denote the inverse map corresponding to the IFS $\{g_u\}_{u\in W_\tau}$ by $S_\tau$, so that we have {\color{cyan} $\{S_{\tau}(x)\}=g_{v_2^\infty}(X)$ and more generally $\{S_\tau^n(x)\}=g_{v_{n+1}^\infty}(X)$.}

A well known calculation (see e.g. \cite{Patzschke97}) shows that a $C^{1+\varepsilon}$ conformal iterated function system satisfies the bounded distortion principle: there exists $L>1$ such that 
$$L^{-1}\leq \frac{\|g'_u(x)\|}{\|g'_u(y)\|}\leq L\  \textrm{ for all }\  u\in \Lambda^*, x,y\in X. $$
Let $\lambda_0=\min \{||g'_i(x)||:i\leq \ell,x\in X\}>0$. {\color{cyan} Recall that $\ell$ is the number of maps in the IFS $G=\{g_i\}_{i=1}^\ell$.} Then for any $u=u_1\cdots u_n\in \Lambda^*$ and $y\in X$,
$$\|g'_u(y)\|=\|g'_{\tilde{u}}(g_{u_n}(y))\|\|g'_{u_n}(y)\|\geq L^{-1}\lambda_0\max_{z\in X}\|g'_{\tilde{u}}(z)\|.$$
Now let $u\in W_\tau$. Then 
$$\tau\leq {\rm diam}(g_{\tilde{u}}(X))\leq \max_{z\in X}\|g'_{\tilde{u}}(z)\|{\rm diam}(X)\leq L\lambda_0^{-1}{\rm diam}(X)\min_{z\in X}\|g'_u(z)\|.$$
On the other hand, $X=g^{-1}_u(g_u(X))$ so we have 
$${\rm diam}(X)\leq \max_{z\in X}\|(g'_u)^{-1}(z)\|{\rm diam}(g_u(X))\leq \max_{z\in X}\|(g'_u)^{-1}(z)\|\cdot \tau$$
and 
$$\max_{z\in X}\|g'_u(z)\|=\left(\min_{z\in X}\|(g'_u)^{-1}(z)\|\right)^{-1}\leq L\tau{\rm diam}(X)^{-1}.$$
So there exists a constant $C>1$ such that for any $0<\tau<1$ and any $u\in W_\tau$, we have 
\begin{equation}\label{equation derivative cut-set}
\tau C^{-1}\leq \min_{z\in X}\|g'_u(z)\|, \ \max_{y\in X}\|g'_u(y)\|\leq \tau C.
\end{equation}
From \eqref{equation derivative cut-set}, we deduce that 
\begin{equation}\label{equation derivative inverse cut-set}
\tau^{-n}C^{-n}\leq \min_{z\in X}\|DS_\tau^n(z)\|, \ \max_{y\in X}\|DS_\tau^n(y)\|\leq \tau^{-n}C^{n}.
\end{equation}

Now we can give the proof of Lemma \ref{lemma LD}.

\begin{proof}[Proof of Lemma \ref{lemma LD}]
{\color{red} We first give the proof for the case $\alpha_{\min}\leq\beta\le\alpha_0$}. Fix $x\in X$ and a small $0<r<1$. Let $n=n(x,r)\in \N$ be such that 
$$|DS^{n+1}_\tau(x)|^{-1}\leq r\leq  |DS^{n}_\tau(x)|^{-1}.$$
From \eqref{equation derivative inverse cut-set}, we know that $\frac{\log r}{\log \tau-\log C}\leq n\leq \frac{\log r}{\log \tau+\log C}$. Here and in the rest of the proof, we always take a $\tau<C^{-1}$ so that $\log \tau+\log C<0$. Then we have 
$$\frac{\int_{r}^1\mathcal{H}(B(x,t))t^{-Q-1}\,dt}{-\log r}\geq \frac{\int_{ |DS^{n}_\tau(x)|^{-1}}^1\mathcal{H}(B(x,t))t^{-Q-1}\,dr}{\log  |DS^{n+1}_\tau(x)|}\,.$$
So we get 
$$\left\{x\in X : \ad(\mathcal{H},x,r)\leq \beta\right\}\subset \left\{x\in X :  \frac{\int_{ |DS^{n}_\tau(x)|^{-1}}^1\mathcal{H}(B(x,t))t^{-Q-1}\,dr}{\log  |DS^{n+1}_\tau(x)|}\leq \beta \right\}=:A_{\tau,n}.$$
Thus we have
$$\frac{\log\mathcal{H}(\Delta_r(0,\beta))}{-\log r}\leq \frac{\log \mathcal{H}(A_{\tau,n})}{-\log r}\leq \frac{\log \mathcal{H}(A_{\tau,n})}{\log |DS^{n+1}_\tau(x)|}\leq \frac{\log \mathcal{H}(A_{\tau,n})}{(n+1)(-\log \tau+\log C)}.$$
For proving the claim of the lemma we only need to show that
$$\limsup_{\tau\to0^+}\limsup_{n\to\infty}\frac{\log \mathcal{H}(A_{\tau,n})}{-n\log \tau}\leq Q-f(\beta).$$

Recall that we can rewrite $A_{\tau,n}$ as 
$$A_{\tau,n}=\left\{x\in X :  \frac{\sum_{k=0}^{n-1}f_k^\tau(x)}{\log  |DS^{n+1}_\tau(x)|}\leq \beta \right\}$$
where $f_k^\tau(x)=\int_{ |DS^{k+1}_\tau(x)|^{-1}}^{ |DS^{k}_\tau(x)|^{-1}}\mathcal{H}(B(x,t))t^{-Q-1}\,dr$ which is asymptotically additive for the system $(X,S_\tau)$.
By Chebyshev's inequality, for any $\lambda\geq 0$
\begin{align*}
\mathcal{H}(A_{\tau,n})
\le&\int_X \exp\left(\lambda \left (\beta \log  |DS^{n+1}_\tau(x)|-\sum_{k=0}^{n-1}f_k^\tau(x)\right)\right)d \mathcal{H}(x)\\
\le& \sum_{v_1^n\in W_\tau^n} \mathcal{H}(g_{v_1^n}(X))\sup_{x\in g_{v_1^n}(X)}\exp\left(\lambda \left (\beta \log  |DS^{n+1}_\tau(x)|-\sum_{k=0}^{n-1}f_k^\tau(x)\right)\right).
\end{align*}

Since $\mathcal{H}$ is the natural measure of the IFS $(g_u)_{u\in W_\tau}$, we have that
$\mathcal{H}(g_{v_1^n}(X))\asymp\exp(-Q\log |DS_\tau^n(x)|)$ for any $x\in g_{v_1^n}(X)$. Whence
\begin{equation}\label{equ app lemma 1}
\begin{split}
&\mathcal{H}(A_{\tau,n})
\lesssim\\  &\sum_{v_1^n\in W_\tau^n}  \sup_{x\in g_{v_1^n}(X)}\exp\left(\lambda \left (\beta \log  |DS^{n+1}_\tau(x)|-\sum_{k=0}^{n-1}f_k^\tau(x)\right) -Q\log |DS_\tau^n(x)|\right)\,, 
\end{split}
\end{equation}
whenever $\lambda\geq 0$.
Note that $|\log  |DS^{n+1}_\tau(x)|-\log  |DS^{n}_\tau(x)||\leq \max_{z\in X}\log|DS_\tau(z)|$. {\color{blue} Here the notation $A\asymp B$ means that $C^{-1}A\le B\le C A$ and $A\lesssim B$ stands for $A\le C B$ for a constant $C<\infty$ independent of $n, \tau$ and $v_1^{n}$. } Taking logarithms and dividing both sides of \eqref{equ app lemma 1} by $n$  and then taking limsup we get 

\begin{equation}\label{equ app lemma 1-1}
\limsup_n\frac{\log \mathcal{H}(A_{\tau,n})}{n}\le  P_\tau(\lambda(\beta \log DS_\tau-F_\tau)-Q\log DS_\tau),\quad\quad \lambda\geq0. 
\end{equation}
where $P_\tau(\lambda(\beta \log DS_\tau-F_\tau)-Q\log DS_\tau)$ is the pressure function (of the system $(X,S_\tau)$) associated to the sequence of functions 
$$\left\{\lambda \left (\beta \log  |DS^{n}_\tau(x)|-\sum_{k=0}^{n-1}f_k^\tau(x)\right) -Q\log |DS_\tau^n(x)|\right\}\,.$$

We now show that the inequality \eqref{equ app lemma 1-1} holds also when $\lambda<0$.
For this, we only need to show that $P_\tau(\lambda(\beta \log DS_\tau-F_\tau)-Q\log DS_\tau)\ge 0$ for $\lambda<0$.  Fix $\lambda<0$. Denote $B_n=\int_X \exp\left(\lambda \left (\beta \log  |DS^{n}_\tau(x)|-\sum_{k=0}^{n-1}f_k^\tau(x)\right)\right)d \mathcal{H}(x)$. We are going to prove that $\limsup_n\frac{\log B_n}{n}\geq 0$, which will imply $P_\tau(\lambda(\beta \log DS_\tau-F_\tau)-Q\log DS_\tau)\ge 0$. 

By Jensen's inequality we have
$$B_n\geq \exp\left( \int_X \lambda \left (\beta \log  |DS^{n}_\tau(x)|-\sum_{k=0}^{n-1}f_k^\tau(x)\right)d \mathcal{H}(x)\right).$$
So 
\begin{equation}\label{equ app lemma 2}
\frac{\log B_n}{n}\ge  \int_X \lambda \left (\beta \frac{\log  |DS^{n}_\tau(x)|}{n}-\frac1n\sum_{k=0}^{n-1}f_k^\tau(x)\right)d \mathcal{H}(x) .
\end{equation}
We know that
$$\lim_{n\to\infty}\frac{\sum_{k=0}^{n-1}f_k^\tau(x)}{\log  |DS^{n}_\tau(x)|}=\alpha_0, \text{ for } \mathcal{H}\textrm{-a.e.} \ x$$
Since $\alpha_0\ge\beta$ and $\lambda<0$, in view of \eqref{equ app lemma 2}, we get
$$\limsup_n\frac{\log B_n}{n}\geq 0.$$
So we have proved that
$$\limsup_n\frac{\log \mathcal{H}(A_{\tau,n})}{n}\le  \inf_{\lambda\in \R} P_\tau(\lambda(\beta \log DS_\tau-F_\tau)-Q\log DS_\tau). $$

For completing the proof, we only need to show that {\color{red}for $\beta\in [\alpha_{\min},\alpha_0]$},
\begin{equation}\label{equ app lemma 3}
\limsup_{\tau\to0^+}\frac{ \inf_{\lambda\in \R} P_\tau(\lambda(\beta \log DS_\tau-F_\tau)-Q\log DS_\tau)}{-\log \tau}\leq Q-f(\beta).
\end{equation}
From the definition of the pressure function $P_\tau$ and the fact $n(-\log\tau-\log C)\le \log|DS^n_\tau(x)|\le n(-\log\tau+\log C)$, we deduce that 
\begin{align*}
&|P_\tau\left(\lambda(\beta \log DS_\tau-F_\tau)-Q\log DS_\tau\right)\\
&-P_\tau\left(\lambda(\beta \log DS_\tau-F_\tau\right)-f(\beta)\log DS_\tau)-(Q-f(\beta))\log\tau|\le 2\log C\,.
\end{align*}
So for proving \eqref{equ app lemma 3}, it is sufficient to show that {\color{red}for $\beta\in [\alpha_{\min},\alpha_0]$},
$$\inf_{\lambda\in \R}P_\tau(\lambda(\beta \log DS_\tau-F_\tau)-f(\beta)\log DS_\tau)=0,$$ 
but this is exactly the statement \eqref{multifractal pressure zero point} of Proposition \ref{Prop multifractal properties} for the system $(X,S_\tau)$. 

{\color{red} Now, we assume that $\beta<\alpha_{\min}$.} We will show that $\Delta_r(0,\beta)=\emptyset$ when $r$ is small enough, this clearly implies the desired result. To this end, we only need to prove that 
\begin{equation}\label{eq:app:lemma:1}
\underline{\alpha}:=\lim_{n\to\infty}\inf_{x\in X}\frac{\sum_{k=0}^{n-1}f_k(x)}{\log DS^n(x)}=\alpha_{\min}.
\end{equation}
By Proposition \ref{Prop multifractal properties}, $\Delta(\alpha_{\min})\neq \emptyset$, i.e., there exists $x\in X$ such that $A(\mathcal{H},x)=\alpha_{\min}$, so $\underline{\alpha}\leq\alpha_{\min}$. It remains to show the reverse inequality. Pick $n_i\to\infty$ and $x_i\in X$ such that 
$\lim_{i\to\infty} \frac{\sum_{k=0}^{n_i-1}f_k(x_i)}{\log DS^{n_i}(x_i)}=\underline{\alpha}$. Let $\mu_{n_i}=\frac{1}{n_i}\sum_{j=0}^{n_i-1}\delta_{S^jx_i}$ for $i\in \N$. Up to taking a subsequence of $(n_i)_i$, we can suppose that $\mu_{n_i}$ converges weakly to some probability measure $\mu$. By \cite[Lemma A.4. (ii)]{FengHuang10}, we know that $\mu \in \mathcal{M}(X,S)$, and moreover
$$\lim_{i\to\infty} \frac{1}{n_i}\int_X \sum_{k=0}^{n_i-1}f_k(x)d\delta_{x_i}=F_*(\mu) \ {\rm and}\ \lim_{i\to\infty} \frac{1}{n_i}\int_X \log DS^{n_i}(x)d\delta_{x_i}=\log DS_*(\mu).$$
Thus we have
$$\underline{\alpha}=\frac{F_*(\mu)}{\log DS_*(\mu)}\geq \alpha_{\min}.$$
This ends the proof of the lemma.
\end{proof}


\bibliographystyle{plain}
\bibliography{bibliography}

\end{document}